\newtheorem{thm}[subsection]{Theorem}
\newtheorem{lemma}[subsection]{Lemma}
\newtheorem{conj}[subsection]{Conjecture}
\newtheorem{remark}[subsection]{Remark}
\theoremstyle{definition}
\newtheorem{example}[subsection]{Example}
\numberwithin{equation}{section}
\def\ra{\rightarrow}
\def\bra{\langle}
\def\ket{\rangle}
\def\cA{{\mathcal A}}
\def\cB{{\mathcal B}}
\def\cC{{\mathcal C}}
\def\cD{{\mathcal D}}
\def\cE{{\mathcal E}}
\def\cO{{\mathcal O}}
\def\cR{{\mathcal R}}
\def\cS{{\mathcal S}}
\def\cV{{\mathcal V}}
\def\cW{{\mathcal W}}
\def\gg{{\mathfrak g}}
\def\gl{{\mathfrak l}}
\def\gp{{\mathfrak p}}
\def\gs{{\mathfrak s}}
\newfont{\german}{eufm10}
\begin{document}
\pagestyle{plain}

\title
{A commutant realization of $\cW_n^{(2)}$ at critical level}

\author{Thomas Creutzig, Peng Gao, and Andrew R. Linshaw}
\address{Fachbereich Mathematik, Technische Universit\"at Darmstadt.}
\email{tcreutzig@mathematik.tu-darmstadt.de}

\address{Simons Center for Geometry and Physics, Stony Brook University.}
\email{pgao@scgp.stonybrook.edu}

\address{Department of Mathematics, Brandeis University.}
\email{linshaw@brandeis.edu}
%\thanks{}

%\date{\today}
{\abstract

\noindent
For $n\geq 2$, there is a free field realization of the affine vertex superalgebra $\cA$ associated to $\gp \gs\gl(n|n)$ at critical level inside the $bc\beta\gamma$ system $\cW$ of rank $n^2$. We show that the commutant $ \cC = \text{Com}(\cA,\cW)$ is purely bosonic and is freely generated by $n+1$ fields. We identify the Zhu algebra of $\cC$ with the ring of invariant differential operators on the space of $n\times n$ matrices under $SL_n \times SL_n$, and we classify the irreducible, admissible $\cC$-modules with finite dimensional graded pieces. For $n\leq 4$, $\cC$ is isomorphic to the $\cW_n^{(2)}$-algebra at critical level, and we conjecture that this holds for all $n$.}

\keywords{conformal field theory; vertex algebra; commutant; affine Lie superalgebra; free field realization; $\cW$-algebra; Zhu functor; Weyl algebra; invariant differential operators}
\maketitle
%\tableofcontents

\section{Introduction}

Let $\cV$ be a vertex algebra, and let $\cA$ be a subalgebra of $\cV$. The {\it commutant} of $\cA$ in $\cV$, denoted by $\text{Com}(\cA,\cV)$, is the subalgebra consisting of all elements $v\in\cV$ such that $ [a(z),v(w)] = 0$ for all $a\in \cA$. This construction was introduced by Frenkel and Zhu \cite{FZ}, generalizing earlier constructions in representation theory \cite{KP} and physics \cite{GKO}, and is important in the construction of coset conformal field theories. If $\cA$ acts semisimply on $\cV$, $\text{Com}(\cA,\cV)$ can often be studied by decomposing $\cV$ as an $\cA$-module. Otherwise, there are few existing techniques for studying commutant vertex algebras, and there are very few examples where an exhaustive description can be given in terms of generators, operator product expansions (OPEs), and normally ordered polynomial relations among the generators.

An equivalent definition of $\text{Com}(\cA,\cV)$ is the set of elements $v\in \cV$ such that $a(z)\circ_n v(z) = 0$ for all $a\in\cA$ and $n\geq 0$. We may regard $\text{Com}(\cA,\cV)$ as the algebra of  invariants in $\cV$ under the action of $\cA$. If $\cA$ is a homomorphic image of an affine vertex algebra associated to some Lie (super)algebra $\gg$, $\text{Com}(\cA,\cV)$ is just the invariant space $\cV^{\gg[t]}$, and in this case one can apply techniques from invariant theory and commutative algebra. This approach was first used in \cite{LL} in a special case, and was developed more fully in \cite{LSSII}.

In \cite{FS}, Feigin and Semikhatov constructed a remarkable family of vertex algebras $\cW_n^{(2)}$ at level $k\in \mathbb{C}$, where $n\geq 2$ is a positive integer. For $n=2$, $\cW^{(2)}_2$ is isomorphic to the affine vertex algebra $V_k(\gs\gl_2)$, and for $n=3$, $\cW^{(2)}_3$ coincides with the Bershadsky-Polyakov vertex algebra \cite{Ber}\cite{P}. The superscript $2$ indicates that $\cW^{(2)}_n$ is a generalization of $V_k(\gs\gl_2)$, and it has certain features in common. For example, $\cW^{(2)}_n$ is generated by two bosonic fields of conformal weight $n/2$, and is expected to be freely generated by these two fields together with $n-1$ additional bosonic fields of weights $1,2,\dots, n-1$. The authors provide two different constructions of $\cW^{(2)}_n$. First, it is defined as an algebra in the kernel of screenings associated to the quantum super group $\mathcal U_q(\gs\gl(n|1))$. Second, it is realized as a subalgebra of $\text{Com}(V_{k'}(\gs\gl_n)\otimes V(\gg\gl_1),V_{k'}(\gs\gl(n|1))\otimes V_L)$, where $V_L$ is a rank one lattice vertex algebra. Here, the levels are related by $(k'+n-1)(k+n-1)=1$. Special cases of $\cW^{(2)}_n$ also appear as subalgebras of extensions of the affine vertex superalgebra $V_k(\gg\gl(1|1))$ \cite{CRi1,CRi2}. 

Another construction of vertex algebras is the Drinfeld-Sokolov reduction \cite{BT,FF}. These are so-called $\cW$-algebras associated to the affine vertex algebra $V_k(\gg)$ of some Lie algebra $\gg$ and an embedding of $\gs\gl_2$ in $\gg$. For example, the Bershadsky-Polyakov vertex algebra $\cW^{(2)}_3$ can be realized as the $\cW$-algebra associated to $V_{k}(\gs\gl_3)$ for the nonprincipal embedding of $\gs\gl_2$ in $\gs\gl_3$. It is expected that the $\cW_n^{(2)}$-algebra of level $k$ is a similar quantum reduction of $V_{k}(\gs\gl_n)$, although this remains an open question for $n\geq 4$.

In this paper, we begin with a free field realization of the affine vertex superalgebra associated to $\gp \gg\gl(n|n)$ at critical level. It is realized as a subalgebra of the $bc\beta\gamma$ system $\cW$ of rank $n^2$. This free field realization is very similar to the realizations of the affine vertex superalgebra $V_k(\gg\gl(n|n))$ \cite{CR}.
The derived subalgebra of $\gp \gg\gl(n|n)$ is $\gp \gs\gl(n|n)$. Letting $\cA$ be the image of the affine vertex superalgebra of $\gp \gs\gl(n|n)$ at critical level in $\cW$, we are interested in $\cC = \text{Com}(\cA,\cW)$. Our first step is to show that $\cC$ is purely bosonic and can be identified with the commutant of $V_{-n}(\gs\gl_n)\otimes V_{-n}(\gs\gl_n)$ inside the $\beta\gamma$ system of rank $n^2$. This problem fits precisely into the framework developed in \cite{LSSII}, and is closely related to rings of invariant polynomial functions on arc spaces. We find a minimal strong finite generating set for $\cC$ consisting of two fields of weight $n/2$ together with $n-1$ additional fields of weights $1,2,\dots, n-1$. There are no nontrivial normally ordered polynomial relations among these generators and their derivatives, so they freely generate $\cC$. For $n\leq 4$, we show that $\cC$ coincides with the $\cW^{(2)}_n$-algebra at critical level and based on OPE calculations, we conjecture that this holds for all $n$. The appearance of $\cW^{(2)}_n$ in this context is quite mysterious, and a conceptual explanation would be very interesting.

We study the representation theory of $\cC$ via its Zhu algebra, which we identify with the ring $\cD^{SL_n\times SL_n}$. Here $\cD$ denotes the Weyl algebra on the space of $n\times n$ matrices. The generators of $\cD^{SL_n\times SL_n}$ correspond to the generators of $\cC$, and we classify the irreducible, finite dimensional modules over $\cD^{SL_n\times SL_n}$, modulo an explicit formula for certain relations among the generators. These modules are in one-to-one correspondence with the irreducible, admissible $\cC$-modules $M = \bigoplus_{n\geq 0} M_n$ for which each $M_k$ is finite dimensional. For $n\leq 4$, we write down these relations in $\cD^{SL_n\times SL_n}$, so our classification is explicit in these cases. In general, the representation theory of $\cW$-algebras at critical level is an important problem about which very little is known. It would be interesting to see whether any features of our classification of modules for the critical level $\cW^{(2)}_n$-algebra carry over to the case of more general $\cW$-algebras at critical level.

In the case $n=4$, there is an application of our results to physics \cite{CGL}.
Berkovits \cite{Be} introduced a sigma model that conjecturally describes a super Yang-Mills theory.
This model can be formulated as a perturbation of a theory containing two copies of $\cW$.
The perturbation is in terms of the currents of the affine vertex superalgebra $\cA$ of $\gp \gs\gl(4|4)$ at critical level, and the algebra that is preserved by the perturbation is two copies of $\cC = \text{Com}(\cA,\cW)$.

\section{Vertex algebras}
In this section, we define vertex algebras, which have been discussed from various different points of view in the literature (see, e.g., \cite{Bo}\cite{BD}\cite{FLM}\cite{K}\cite{FBZ}). We will follow the formalism developed in \cite{LZ} and partly in \cite{LiI}. Let $V=V_0\oplus V_1$ be a super vector space over $\mathbb{C}$, and let $z,w$ be formal variables. By $\text{QO}(V)$, we mean the space of all linear maps $$V\rightarrow V((z)):=\{\sum_{n\in\mathbb{Z}} v(n) z^{-n-1}|
v(n)\in V,\ v(n)=0\ \text{for} \ n>>0 \}.$$ Each element $a\in \text{QO}(V)$ can be
uniquely represented as a power series
$$a=a(z):=\sum_{n\in\mathbb{Z}}a(n)z^{-n-1}\in \text{End}(V)[[z,z^{-1}]].$$ We
refer to $a(n)$ as the $n$th Fourier mode of $a(z)$. Each $a\in
\text{QO}(V)$ is assumed to be of the shape $a=a_0+a_1$ where $a_i:V_j\ra V_{i+j}((z))$ for $i,j\in\mathbb{Z}/2\mathbb{Z}$, and we write $|a_i| = i$.

On $\text{QO}(V)$ there is a set of nonassociative bilinear operations
$\circ_n$, indexed by $n\in\mathbb{Z}$, which we call the $n$th circle
products. For homogeneous $a,b\in \text{QO}(V)$, they are defined by
$$
a(w)\circ_n b(w)=\text{Res}_z a(z)b(w)~\iota_{|z|>|w|}(z-w)^n-
(-1)^{|a||b|}\text{Res}_z b(w)a(z)~\iota_{|w|>|z|}(z-w)^n.
$$
Here $\iota_{|z|>|w|}f(z,w)\in\mathbb{C}[[z,z^{-1},w,w^{-1}]]$ denotes the
power series expansion of a rational function $f$ in the region
$|z|>|w|$. We usually omit the symbol $\iota_{|z|>|w|}$ and just
write $(z-w)^{-1}$ to mean the expansion in the region $|z|>|w|$,
and write $-(w-z)^{-1}$ to mean the expansion in $|w|>|z|$. It is
easy to check that $a(w)\circ_n b(w)$ above is a well-defined
element of $\text{QO}(V)$.

The nonnegative circle products are connected through the OPE formula.
For $a,b\in \text{QO}(V)$, we have \begin{equation} \label{opeformula} a(z)b(w)=\sum_{n\geq 0}a(w)\circ_n
b(w)~(z-w)^{-n-1}+:a(z)b(w):,\end{equation} which is often written as
$a(z)b(w)\sim\sum_{n\geq 0}a(w)\circ_n b(w)~(z-w)^{-n-1}$, where
$\sim$ means equal modulo the term $$
:a(z)b(w): \ =a(z)_-b(w)\ +\ (-1)^{|a||b|} b(w)a(z)_+.$$ Here
$a(z)_-=\sum_{n<0}a(n)z^{-n-1}$ and $a(z)_+=\sum_{n\geq
0}a(n)z^{-n-1}$. Note that $:a(w)b(w):$ is a well-defined element of
$\text{QO}(V)$. It is called the {\it Wick product} of $a$ and $b$, and it
coincides with $a\circ_{-1}b$. The other negative circle products
are related to this by
$$ n!~a(z)\circ_{-n-1}b(z)=\ :(\partial^n a(z))b(z):\ ,$$
where $\partial$ denotes the formal differentiation operator
$\frac{d}{dz}$. For $a_1(z),\dots ,a_k(z)\in \text{QO}(V)$, the $k$-fold
iterated Wick product is defined to be
$$ :a_1(z)a_2(z)\cdots a_k(z): = :a_1(z)b(z):,$$
where $b(z)= :a_2(z)\cdots a_k(z):$. We often omit the formal variable $z$ when no confusion can arise.

The set $\text{QO}(V)$ is a nonassociative algebra with the operations
$\circ_n$ and a unit $1$. We have $1\circ_n a=\delta_{n,-1}a$ for
all $n$, and $a\circ_n 1=\delta_{n,-1}a$ for $n\geq -1$. A linear subspace $\cA\subset \text{QO}(V)$ containing 1 which is closed under the circle products will be called a {\it quantum operator algebra} (QOA).
In particular, $\cA$ is closed under $\partial$
since $\partial a=a\circ_{-2}1$. Many formal algebraic
notions are immediately clear: a homomorphism is just a linear
map that sends $1$ to $1$ and preserves all circle products; a module over $\cA$ is a
vector space $M$ equipped with a homomorphism $\cA\rightarrow
\text{QO}(M)$, etc. A subset $S=\{a_i|\ i\in I\}$ of $\cA$ is said to generate $\cA$ if any element $a\in\cA$ can be written as a linear
combination of nonassociative words in the letters $a_i$, $\circ_n$, for
$i\in I$ and $n\in\mathbb{Z}$. We say that $S$ {\it strongly generates} $\cA$ if any $a\in\cA$ can be written as a linear combination of words in the letters $a_i$, $\circ_n$ for $n<0$. Equivalently, $\cA$ is spanned by the collection $\{ :\partial^{k_1} a_{i_1}(z)\cdots \partial^{k_m} a_{i_m}(z):| ~i_1,\dots,i_m \in I,~ k_1,\dots,k_m \geq 0\}$. We say that $S$ {\it freely generates} $\cA$ if there are no nontrivial normally ordered polynomial relations among the generators and their derivatives.

We say that $a,b\in \text{QO}(V)$ {\it quantum commute} if $(z-w)^N
[a(z),b(w)]=0$ for some $N\geq 0$. Here $[,]$ denotes the super bracket. This condition implies that $a\circ_n b = 0$ for $n\geq N$, so (\ref{opeformula}) becomes a finite sum. A {\it commutative QOA} (CQOA) is a QOA whose elements pairwise quantum commute. Finally, the notion of a CQOA is equivalent to the notion of a vertex algebra. Every CQOA $\cA$ is itself a faithful $\cA$-module, called the {\it left regular
module}. Define
$$\rho:\cA\rightarrow \text{QO}(\cA),\ \ \ \ a\mapsto\hat a,\ \ \ \ \hat
a(\zeta)b=\sum_{n\in\mathbb{Z}} (a\circ_n b)~\zeta^{-n-1}.$$ Then $\rho$ is an injective QOA homomorphism,
and the quadruple of structures $(\cA,\rho,1,\partial)$ is a vertex
algebra in the sense of Frenkel et al. \cite{FLM}. Conversely, if $(V,Y,{\bf 1},D)$ is
a vertex algebra, the collection $Y(V)\subset \text{QO}(V)$ is a
CQOA. {\it We will refer to a CQOA simply as a
vertex algebra throughout the rest of this paper}.

\begin{example}[Affine vertex algebras] Let $\gg$ be a finite dimensional, complex Lie (super)algebra, equipped with a symmetric, 
invariant bilinear form $B$. The loop algebra $\gg[t,t^{-1}] = \gg\otimes \mathbb{C}[t,t^{-1}]$ has a one-dimensional 
central extension $\hat{\gg} = \gg[t,t^{-1}]\oplus \mathbb{C}\kappa$ determined by $B$, with bracket 
$$[\xi t^n, \eta t^m] = [\xi,\eta] t^{n+m} + n B(\xi,\eta) \delta_{n+m,0} \kappa,$$ and $\mathbb{Z}$-gradation 
$\text{deg}(\xi t^n) = n$, $\text{deg}(\kappa) = 0$. Let $\hat{\gg}_{\geq 0} = \bigoplus_{n\geq 0} \hat{\gg}_n$, where $\hat{\gg}_n$ 
denotes the subspace of degree $n$, and let $C$ be the one-dimensional $\hat{\gg}_{\geq 0}$-module on which $\xi t^n$ acts trivially 
for $n\geq 0$, and $\kappa$ acts by $k$ times the identity. Define $V = U(\hat{\gg})\otimes_{U(\hat{\gg}_{\geq 0})} C$, and 
let $X^{\xi}(n)\in \text{End}(V)$ be the linear operator representing $\xi t^n$ on $V$. 
Define $X^{\xi} (z) = \sum_{n\in\mathbb{Z}} X^{\xi} (n) z^{-n-1}$, which is easily seen to lie in $\text{QO}(V)$ and satisfy the 
OPE relation $$X^{\xi}(z)X^{\eta} (w)\sim kB(\xi,\eta) (z-w)^{-2} + X^{[\xi,\eta]}(w) (z-w)^{-1} .$$ 
The vertex algebra $V_k(\gg,B)$ generated by $\{X^{\xi}| \ \xi \in\gg\}$ is known as the {\it universal affine vertex algebra} associated 
to $\gg$ and $B$ at level $k$.

Note that, for some Lie superalgebras, as, for example, $\gp\gs\gl(n|n)$, the supertrace in the adjoint representation is degenerate.
If the bilinear form $B$ is the standardly normalized supertrace in the adjoint representation of $\gg$ and if $B$ is nondegenerate, then the universal 
affine vertex algebra associated to $\gg$ and $B$ of level $k$ is
denoted by $V_k(\gg)$. 

We recall the \emph{Sugawara construction} for affine vertex superalgebras following \cite{KRW}.
Suppose that $\gg$ is simple and that $B$ is nondegenerate. Let $\{\xi\}$ and $\{\xi'\}$ be dual bases of $\gg$, that is, $B(\xi',\eta)=\delta_{\xi,\eta}$.
Then the Casimir operator is $C_2=\sum_{\xi}\xi\xi'$.
The dual Coxeter number $h^\vee$ with respect to the bilinear form $B$ is one-half the eigenvalue of $C_2$ in the adjoint representation of $\gg$.
If $k+h^\vee\neq0$, there is a Virasoro field
\begin{equation}
L(z) = \frac{1}{2(k+h^\vee)}\sum_\xi :X^{\xi}(z)X^{\xi'}(z):
\end{equation}
of central charge
\begin{equation}
c= \frac{k\text{sdim}\gg}{k+h^\vee}\, .
\end{equation}
This Virasoro element is known as the {\it Sugawara conformal vector}, and each $X^{\xi}$ is primary of weight one. 
At the critical level $k = - h^{\vee}$, $L(z)$ does not exist, but $V_{-h^{\vee}} (\gg, B)$ still possesses a quasi-conformal structure, that is, an action of the Lie subalgebra $\{L_n|~n\geq -1\}$ of the Virasoro algebra such that $L_{-1}$ acts by translation and $L_0$ acts diagonalizably.

\end{example}

\begin{example}[Affine vertex superalgebra of $\gp\gg\gl(n|n)$ at critical level]\label{ex:pgl}
The Lie superalgebra $\gg\gl(n|n)$ has a basis $\{E^\pm_{ab},F^\pm_{ab}|1\leq a,b\leq n\}$.
The $E^\pm_{ab}$ generate two commuting copies of $\gg\gl(n)$ and the $F^\pm_{ab}$ are odd.
The relations are
\begin{equation}
\begin{split}
[E_+^{ab},E_+^{cd}]  &=  \delta^{bc}E_+^{ad}-\delta^{ad}E_+^{cb}\ , \qquad 
[E_-^{ab},E_-^{cd}]  =  \delta^{bc}E_-^{ad}-\delta^{ad}E_-^{cb} \ , \qquad 
[E_+^{ab},E_-^{cd}]  =  0\ , \\
[E_+^{ab},F_+^{cd}]  &=  \delta^{bc}F_+^{ad}\ , \qquad \qquad\quad\ \ \ \, 
[E_-^{ab},F_-^{cd}]  =  \delta^{bc}F_-^{ad} \ , \\
[E_+^{ab},F_-^{cd}]  &=  -\delta^{ad}F_-^{cb}\ , \qquad\qquad\ \ \ \ \, 
[E_-^{ab},F_+^{cd}]  =  -\delta^{ad}F_+^{cb}\ , \\
[F_+^{ab},F_-^{cd}]  &=  \delta^{bc}E_+^{ad}+\delta^{ad}E_-^{cb}\ , \qquad 
[F_+^{ab},F_+^{cd}]  = [F_-^{ab},F_-^{cd}]  = 0\ .
\end{split}
\end{equation}
The element $C=\sum_{a}(E_+^{aa}+E_-^{aa})$ is central. The quotient of $\gg\gl(n|n)$ by the one-dimensional ideal generated by $C$ is the Lie superalgebra
$\gp\gg\gl(n|n)$. This superalgebra is not simple, since the element $K=\sum_{a}(E_+^{aa}-E_-^{aa})$ is not in the derived subalgebra.
Nonetheless, $\gp\gg\gl(n|n)$ possesses a unique Casimir operator, which acts trivially in the adjoint representation.
Hence, the dual Coxeter number is zero. 
The affine vertex superalgebra of $\gp\gg\gl(n|n)$ at critical level $k=0$ is generated by $\{X^{\xi}| \ \xi \in\gp\gg\gl(n|n)\}$.
The operator product algebra is 
\begin{equation}
X^{\xi}(z)X^{\eta} (w)\sim X^{[\xi,\eta]}(w) (z-w)^{-1} .
\end{equation}
\end{example}
 
\begin{example}[$\beta\gamma$ and $bc$ systems] Let $V$ be a finite dimensional complex vector space. The $\beta\gamma$ system or algebra of chiral differential operators $\cS = \cS(V)$ was introduced in \cite{FMS}. It is the unique vertex algebra with even generators $\beta^{x}(z)$, $\gamma^{x'}(z)$ for $x\in V$, $x'\in V^*$, which satisfy
\begin{equation}\beta^x(z)\gamma^{x'}(w)\sim\langle x',x\rangle (z-w)^{-1},\ \ \ \ \ \ \gamma^{x'}(z)\beta^x(w)\sim -\langle x',x\rangle (z-w)^{-1},\end{equation}
$$\beta^x(z)\beta^y(w)\sim 0,\ \ \ \ \ \gamma^{x'}(z)\gamma^{y'}(w)\sim 0.$$ Here $\bra,\ket$ denotes the natural pairing between $V^*$ and $V$. We give $\cS$ the conformal structure 
\begin{equation} \label{virasorobg} L_{\cS}(z) =  \frac{1}{2} \sum_{i=1}^n :\beta^{x_i}\partial\gamma^{x'_i}: -\frac{1}{2} \sum_{i=1}^n :\partial\beta^{x_i}\gamma^{x'_i}:,\end{equation} under which $\beta^{x_i}(z)$ and $\gamma^{x'_i}(z)$ are both primary of weight $1/2$. Here $\{x_1,\dots,x_n\}$ is a basis for $V$ and $\{x'_1,\dots,x'_n\}$ is the dual basis for $V^*$. 

Similarly, the $bc$ system $\cE = \cE(V)$, which was also introduced in \cite{FMS}, is the unique vertex superalgebra with odd generators $b^{x}(z)$, $c^{x'}(z)$ for $x\in V$, $x'\in V^*$, which satisfy
\begin{equation}b^x(z)c^{x'}(w)\sim\langle x',x\rangle (z-w)^{-1},\ \ \ \ \ \ c^{x'}(z)b^x(w)\sim \langle x',x\rangle (z-w)^{-1},\end{equation}
$$b^x(z)b^y(w)\sim 0,\ \ \ \ \ c^{x'}(z)c^{y'}(w)\sim 0.$$ We give $\cE$ the conformal structure \begin{equation} \label{virasorobc} L_{\cE}(z) = \frac{1}{2} \sum_{i=1}^n :\partial b^{x_i} c^{x'_i}: - \frac{1}{2} \sum_{i=1}^n  :b^{x_i} \partial c^{x'_i} :,\end{equation} under which $b^{x_i}(z)$ and $c^{x'_i}(z)$ are primary of weight $1/2$. \end{example}

\begin{example}[The $\cW_n^{(2)}$-algebra at critical level]
In \cite{FS}, Feigin and Semikhatov constructed a family of vertex algebras, the $\cW_n^{(2)}$-algebra at level $k$. Here $k\in\mathbb C$ and $n\geq 2$ is a positive integer. The authors provide two different constructions, as an algebra in the kernel of screenings associated to the quantum super group $\mathcal U_q(\gs\gl(n|1))$ and as a subalgebra of $\text{Com}(V_{k'}(\gs\gl_n)\otimes V(\gg\gl_1),V_{k'}(\gs\gl(n|1))\otimes V_L)$, where $V_L$ is a rank one lattice vertex algebra, and the levels are related by $(k'+n-1)(k+n-1)=1$. 
The $\cW_n^{(2)}$-algebra is generated by two bosonic fields of conformal weight $n/2$, and is expected to be strongly generated by these two fields and $n-1$ additional bosonic fields of conformal weights $1,2,3,\dots,n-1$. The $\cW_2^{(2)}$-algebra at level $k$ is the affine vertex algebra of $\gs\gl_2$ at level $k$ and the $\cW_3^{(2)}$-algebra at level $k$ is the Bershadsky-Polyakov algebra of level $k$, a $\cW$-algebra that is defined as the quantum reduction of $V_{k}(\gs\gl_3)$ for the nonprincipal embedding of $\gs\gl_2$ in $\gs\gl_3$. In general, the $\cW_n^{(2)}$-algebra of level $k$ is believed to be a similar quantum reduction of $V_{k}(\gs\gl_n)$. Recall that, for a given simple Lie (super) algebra $\gg$ and an embedding of $\gs\gl_2$, a strong generating set of fields of the quantum reduced $\cW$-algebra is known \cite{BT,FF,KRW}. We can decompose $\gg$ into irreducible representations of $\gs\gl_2$, and strong generators
are in one-to-one correspondence with lowest-weight states. Moreover, the conformal weights of these generating fields are related to the dimension $d$ of the irreducible lowest-weight representation as $(d+1)/2$.
Returning to the present example, there is the obvious embedding of $\gs\gl_{n-1}$ in $\gs\gl_n$, such that $\gs\gl_n$ decomposes into the adjoint representation plus the trivial representation plus the standard and its conjugate module of $\gs\gl_{n-1}$. Consider the nonprincipal embedding of $\gs\gl_2$ in $\gs\gl_n$, where the $\gs\gl_2$ is principal in the $\gs\gl_{n-1}$-subalgebra. The corresponding quantum reduced $\cW$-algebra is strongly generated by two bosonic fields of conformal weight $n/2$, plus $n-1$ additional bosonic fields of conformal weights $1,2,3,\dots,n-1$. For general $n$ it is an open question whether this quantum reduced $\cW$-algebra is the Feigin-Semikhatov $\cW_n^{(2)}$-algebra.

The $\cW_n^{(2)}$-algebra contains a rank one Heisenberg algebra with generator $H$, and for generic level a Virasoro algebra of central charge 
\begin{equation}\label{centralcharge}
c_n(k) = -\frac{\bigl((k+n)(n-1)-n\bigr)\bigl((k+n)(n-2)n-n^2+1\bigr)}{k+n}\, .
\end{equation}
The operator product algebra is unknown for general $n$. However, Feigin and Semikhatov obtained this algebra for $n=2,3,4$ \cite{FS}. The $\cW_n^{(2)}$-algebra at critical level $k=-n$ contains a large center but no Virasoro algebra. In the case $n=2$, the critical level $\cW_2^{(2)}$-algebra is the affine vertex algebra $V_{-2}(\gs\gl_2)$. For $n=3$, the critical level $\cW_3^{(2)}$-algebra has generators $H,X^\pm, S_2$, and the nonregular operator product algebra is
\begin{equation}\label{W3}
\begin{split}
X^+(z)X^-(w)&\sim 6(z-w)^{-3}-6H(w)(z-w)^{-2}+\\
&\quad \Bigl(3:H(w)H(w):-S_2(w)-3\partial H(w)\Bigr)(z-w)^{-1}\\
H(z)X^\pm(w)&\sim \pm X^\pm(w)(z-w)^{-1} \\
H(z)H(w)&\sim -(z-w)^{-2}\, .
\end{split}
\end{equation}

Similarly, the critical level $\cW_4^{(2)}$-algebra is generated by fields $H,X^\pm, S_2,S_3$, and the nonregular operator product algebra is

\begin{equation}\label{W4}
\begin{split}
X^+(z)X^-(w)&\sim -24(z-w)^{-4}+24H(w)(z-w)^{-3}+\\
&\quad \Bigl(2S_2(w)-12:H(w)H(w):+12\partial H(w)\Bigr)(z-w)^{-2}+\\
&\quad\Bigl(-2S_3(w)+\partial S_2(w)-2:S_2(w)H(w):+4:H(w)H(w)H(w):+\\
&\quad\ \ -12:\partial H(w)H(w):+4\partial^2 H(w)\Bigr)(z-w)^{-1}\\
H(z)X^\pm(w)&\sim \pm X^\pm(w)(z-w)^{-1} \\
H(z)H(w)&\sim -(z-w)^{-2}\, .
\end{split}
\end{equation}
\end{example}

\subsection{The commutant construction}

Let $\cV$ be a vertex algebra, and let $\cA$ be a subalgebra of $\cV$. 
The commutant of $\cA$ in $\cV$, denoted by $\text{Com}(\cA,\cV)$, is the subalgebra of vertex operators $v\in\cV$ such that $[a(z),v(w)] = 0$ 
for all $a\in\cA$. Equivalently, $a(z)\circ_n v(z) = 0$ for all $a\in\cA$ and $n\geq 0$. 
We regard $\text{Com}(\cA,\cV)$ as the algebra of  invariants in $\cV$ under the action of $\cA$. 
If $\cA$ is a homomorphic image of an affine vertex algebra $V_k(\gg, B)$ for some Lie superalgebra $\gg$ and bilinear form $B$, $\text{Com}(\cA,\cV)$ is just the invariant space $\cV^{\gg[t]}$.

\subsection{The Zhu functor}
Let $\cV$ be a vertex algebra with weight grading $\cV = \bigoplus_{n\in\mathbb{Z}} \cV_n$. 
The Zhu functor \cite{Zh} attaches to $\cV$ an associative algebra $A(\cV)$, together with a surjective linear map 
$\pi_{\text{Zhu}}:\cV\ra A(\cV)$. For $a\in \cV_m$, and $b\in\cV$, define 
$$a*b = \text{Res}_z \bigg (a(z) \frac{(z+1)^{m}}{z}b\bigg),$$ and extend $*$ by linearity to a bilinear operation $\cV\otimes \cV\ra \cV$. 
Let $O(\cV)$ denote the subspace of $\cV$ spanned by elements of the form $$a\circ b = \text{Res}_z \bigg (a(z) \frac{(z+1)^{m}}{z^2}b\bigg),$$ 
for $a\in\cV_m$, and let $A(\cV)$ be the quotient $\cV/O(\cV)$, with projection $\pi_{\text{Zhu}}:\cV\ra A(\cV)$. 
For $a,b\in \cV$, $a\sim b$ means $a-b\in O(\cV)$, and $[a]$ denotes the image of $a$ in $A(\cV)$. 
\begin{thm} (Zhu) $O(V)$ is a two-sided ideal in $V$ under the product $*$, and $(A(V),*)$ is an associative algebra with unit $[1]$. 
The assignment $\cV\mapsto A(\cV)$ is functorial.\end{thm}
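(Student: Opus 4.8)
The plan is to follow Zhu's original argument \cite{Zh}, translating every assertion into formal residue computations governed by the OPE formula (\ref{opeformula}) and the quasi-conformal structure, under which $\partial$ raises conformal weight by one and $(\partial a)(z)=\partial_z a(z)$. It is convenient first to record, for $a\in\cV_m$ and $b\in\cV$, the mode expansions
\[
a*b=\sum_{i\geq 0}\binom{m}{i}\,a\circ_{i-1}b,\qquad a\circ b=\sum_{i\geq 0}\binom{m}{i}\,a\circ_{i-2}b,
\]
so that $*$ and $\circ$, and all the auxiliary expressions below, are finite linear combinations of circle products. For $a\in\cV_m$ write $R_n(a,b)=\text{Res}_z\big(a(z)(z+1)^m z^{-n}b\big)$, so that $a*b=R_1(a,b)$, $a\circ b=R_2(a,b)$, and $O(\cV)$ is by definition spanned by the elements $R_2(a,b)$.

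The technical core is the lemma that $R_n(a,b)\in O(\cV)$ for every homogeneous $a$, every $b\in\cV$, and every $n\geq 2$. The case $n=2$ is the definition. For the inductive step I apply $\text{Res}_z\,\partial_z(-)=0$ to $a(z)(z+1)^{m+1}z^{-n}b$; using $\partial_z\big((z+1)^{m+1}z^{-n}\big)=(m+1)(z+1)^m z^{-n}-n(z+1)^{m+1}z^{-n-1}$ together with $(z+1)^{m+1}=(z+1)^m+z(z+1)^m$, and noting that $\partial a\in\cV_{m+1}$, I obtain the recursion
\[
n\,R_{n+1}(a,b)=R_n(\partial a,b)+(m+1-n)\,R_n(a,b).
\]
Since $n\geq 2$ is invertible and $\partial a$ is again homogeneous, the inductive hypothesis applied to the right-hand side forces $R_{n+1}(a,b)\in O(\cV)$. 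This lemma is exactly what makes residues with a pole of order at least two at $z=0$ negligible modulo $O(\cV)$.

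With the lemma in hand, both the ideal property and associativity reduce to rewriting iterated residues as combinations of the $R_n$ with $n\geq 2$. For the two-sided ideal property I would write $u*(a\circ b)$ and $(a\circ b)*u$ as double residues in two variables, use the iterate (associativity) formula relating the modes of $a\circ_k b$ to those of $a$ and $b$ together with the commutator part of (\ref{opeformula}) to reorganize them, and check that the outcome is a combination of residues $R_n(\,\cdot\,,\,\cdot\,)$ with $n\geq 2$, which lie in $O(\cV)$. For associativity I would show that $(a*b)*c$ and $a*(b*c)$ coincide, modulo $O(\cV)$, with a common double residue of $a(z_1)b(z_2)c$ against a kernel with simple poles along $z_1=0$ and $z_2=0$ (proportional to $(z_1+1)^m(z_2+1)^p z_1^{-1}z_2^{-1}$ for $a\in\cV_m$, $b\in\cV_p$), the passage from each nested residue to this common form being mediated by the Jacobi identity and the discrepancies being residues with higher-order poles. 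I expect this associativity computation to be the main obstacle: it is the one place where the full Jacobi identity, rather than only the translation axiom, is needed, the bookkeeping of correction terms is delicate, and one must first extend $*$ to an inhomogeneous first argument by linearity along the weight grading (since $a*b$ is not homogeneous) before the manipulation is even meaningful.

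Finally, $[1]$ is a two-sided unit: since $1(z)=\id$ one has $1*a=\text{Res}_z(z^{-1}a)=a$, while $a*1=\sum_{i\geq 0}\binom{m}{i}\,a\circ_{i-1}1=a$ because $a\circ_{-1}1=a$ and $a\circ_{j}1=0$ for $j\geq 0$. For functoriality, a homomorphism $\phi\colon\cV\to\cW$ of $\mathbb{Z}$-graded vertex algebras preserves the vacuum, the weight grading, and all circle products, hence preserves $*$ and $\circ$ and satisfies $\phi(O(\cV))\subseteq O(\cW)$; it therefore descends to an algebra homomorphism $A(\phi)\colon A(\cV)\to A(\cW)$ with $A(\phi)\circ\pi_{\text{Zhu}}=\pi_{\text{Zhu}}\circ\phi$, and $A(\id)=\id$, $A(\psi\circ\phi)=A(\psi)\circ A(\phi)$ are then immediate.
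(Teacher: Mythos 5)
The paper does not prove this statement: it is Zhu's theorem, quoted verbatim and attributed to \cite{Zh}, so there is no in-paper argument to compare yours against. Your outline does follow Zhu's original route, and the one piece you actually carry out is correct: the recursion $n\,R_{n+1}(a,b)=R_n(\partial a,b)+(m+1-n)\,R_n(a,b)$, obtained from $\text{Res}_z\,\partial_z\bigl(a(z)(z+1)^{m+1}z^{-n}b\bigr)=0$ together with $(z+1)^{m+1}=(z+1)^m+z(z+1)^m$ and $\partial a\in\cV_{m+1}$, checks out, and it correctly yields the key lemma that $R_n(a,b)\in O(\cV)$ for all $n\geq 2$. The unit computation and the functoriality argument are also fine (with the implicit understanding that morphisms are grading-preserving, which is needed since $*$ and $\circ$ depend on the weight of the first argument).

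The genuine gap is that the two assertions carrying the real content of the theorem --- that $O(\cV)$ is a two-sided ideal for $*$ and that $*$ is associative modulo $O(\cV)$ --- are only announced as plans (``I would write \dots'', ``I expect this \dots to be the main obstacle'') and never executed. These are not corollaries of the reduction lemma alone: one must actually establish identities of the form $a*b\equiv\sum_{i\geq 0}\binom{m}{i}a\circ_{i-1}b$ reorganized via the commutator and iterate formulas so that every discrepancy lands in $\operatorname{span}\{R_n(\cdot,\cdot):n\geq 2\}$, and the left-ideal property $u*(a\circ b)\in O(\cV)$ in particular requires the companion congruence $a*b-b*a\equiv\text{Res}_z\bigl(a(z)(z+1)^{m-1}b\bigr)\bmod O(\cV)$ (or an equivalent rearrangement), which you neither state nor derive. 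Since you explicitly flag associativity as the step you have not done, the proposal as written is an accurate roadmap of Zhu's proof rather than a proof; to complete it you would need to carry out the double-residue manipulations in \cite{Zh}, Section 2, where these congruences are verified.
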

Let $\mathcal{V}$ be a vertex algebra which is strongly generated by a set of weight-homogeneous elements $\alpha_i$ of weights $w_i$, 
for $i$ in some index set $I$. Then $A(\mathcal{V})$ is generated by $\{ a_i = \pi_{\text{Zhu}}(\alpha_i(z))|~i\in I\}$. 
For example, given a Lie (super)algebra $\gg$ with a bilinear form $B$, it is well known that the Zhu algebra $A(V_k(\gg, B))$ is isomorphic to the universal enveloping algebra $U(\gg)$. Given a finite dimensional vector space $V$, the Zhu algebra of $\cS = \cS(V)$ is isomorphic to the Weyl algebra $\cD = \cD(V)$, which is the associative algebra with generators $x'_i,\frac{\partial}{\partial x'_i}$, and relations $[\frac{\partial}{\partial x'_i}, x'_j] = \delta_{i,j}$. The main application of the Zhu functor is to study the representation theory of $\cV$. A $\mathbb{Z}_{\geq 0}$-graded module $M = \bigoplus_{n\geq 0} M_n$ over $\cV$ is called {\it admissible} if, for every $a\in\cV_m$, $a(n) M_k \subset M_{m+k -n-1}$ for all $n\in\mathbb{Z}$. Given $a\in\cV_m$, the Fourier mode $a(m-1)$ acts on each $M_k$. The subspace $M_0$ is then a module over $A(\cV)$ with action $[a]\mapsto a(m-1) \in \text{End}(M_0)$. In fact, $M\mapsto M_0$ provides a one-to-one correspondence between irreducible, admissible $\cV$-modules, and irreducible $A(\cV)$-modules.

The Zhu functor and the commutant construction interact in the following way: for any subalgebra $\cB\subset \cV$, we have a commutative diagram 
\begin{equation}\label{cdgencase} \begin{array}[c]{ccc}
\text{Com}(\cB,\cV)&\stackrel{}{\hookrightarrow}& \cV  \\
\downarrow\scriptstyle{\pi}&&\downarrow\scriptstyle{\pi_{\text{Zhu}}}\\
\text{Com}(B,A(\cV))&\stackrel{}{\hookrightarrow}& A(\cV)
\end{array} .\end{equation} 
Here $B=\pi_{\text{Zhu}}(\cB)\subset A(\cV)$, and $\text{Com}(B,A(\cV))$ is the ordinary commutant in the theory of associative algebras. The horizontal maps are inclusions, and $\pi$ is the restriction of $\pi_{\text{Zhu}}$ to $\text{Com}(\cB,\cV)$. In general, the map $\pi$ need not be surjective and $A(\text{Com}(\cB,\cV))$ need not coincide with $\text{Com}(B,A(\cV))$. However, as we shall see, both these statements are true in the main examples we consider in this paper.

\section{Graded and filtered structures}
Let $\cR$ be the category of vertex algebras $\cA$ equipped with a $\mathbb{Z}_{\geq 0}$-filtration
\begin{equation} \cA_{(0)}\subset\cA_{(1)}\subset\cA_{(2)}\subset \cdots,\ \ \ \cA = \bigcup_{k\geq 0}
\cA_{(k)}\end{equation} such that $\cA_{(0)} = \mathbb{C}$, and, for all
$a\in \cA_{(k)}$, $b\in\cA_{(l)}$, we have
\begin{equation} \label{goodi} a\circ_n b\in\cA_{(k+l)},\ \ \ \text{for}\
n<0,\end{equation}
\begin{equation} \label{goodii} a\circ_n b\in\cA_{(k+l-1)},\ \ \ \text{for}\
n\geq 0.\end{equation}
Elements $a(z)\in\cA_{(d)}\setminus \cA_{(d-1)}$ are said to have degree $d$.

Filtrations on vertex algebras satisfying (\ref{goodi})-(\ref{goodii})~were introduced in \cite{LiII}, and are known as {\it good increasing filtrations}. Setting $\cA_{(-1)} = \{0\}$, the associated graded object $\text{gr}(\cA) = \bigoplus_{k\geq 0}\cA_{(k)}/\cA_{(k-1)}$ is a
$\mathbb{Z}_{\geq 0}$-graded associative, supercommutative algebra with a
unit $1$ under a product induced by the Wick product on $\cA$. For each $r\geq 1$, we have the projection \begin{equation} \label{projmap}\phi_r: \cA_{(r)} \ra \cA_{(r)}/\cA_{(r-1)}\subset \text{gr}(\cA).\end{equation} 
Moreover, $\text{gr}(\cA)$ has a derivation $\partial$ of degree zero
(induced by the operator $\partial = \frac{d}{dz}$ on $\cA$), and
for each $a\in\cA_{(d)}$ and $n\geq 0$, the operator $a\circ_n$ on $\cA$
induces a derivation of degree $d-k$ on $\text{gr}(\cA)$, which we denote by $a(n)$. Here $$k  = \text{sup} \{ j\geq 1|~ \cA_{(r)}\circ_n \cA_{(s)}\subset \cA_{(r+s-j)}~\forall r,s,n\geq 0\},$$ as in \cite{LL}. Finally, these derivations give $\text{gr}(\cA)$ the structure of a vertex Poisson algebra.

The assignment $\cA\mapsto \text{gr}(\cA)$ is a functor from $\cR$ to the category of $\mathbb{Z}_{\geq 0}$-graded supercommutative rings with a differential $\partial$ of degree 0, which we will call $\partial$-rings. A $\partial$-ring is the same as an {\it abelian} vertex algebra, that is, a vertex algebra $\cV$ in which $[a(z),b(w)] = 0$ for all $a,b\in\cV$. A $\partial$-ring $A$ is said to be generated by a subset $\{a_i|~i\in I\}$ if $\{\partial^k a_i|~i\in I, k\geq 0\}$ generates $A$ as a graded ring. The key feature of $\cR$ is the following reconstruction property \cite{LL}.

\begin{lemma}\label{reconlem}Let $\cA$ be a vertex algebra in $\cR$ and let $\{a_i|~i\in I\}$ be a set of generators for $\text{gr}(\cA)$ as a $\partial$-ring, where $a_i$ is homogeneous of degree $d_i$. If $a_i(z)\in\cA_{(d_i)}$ are vertex operators such that $\phi_{d_i}(a_i(z)) = a_i$, then $\cA$ is strongly generated as a vertex algebra by $\{a_i(z)|~i\in I\}$.\end{lemma}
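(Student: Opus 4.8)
The plan is to argue by induction on the filtration degree, exploiting the compatibility of the Wick product and of $\partial$ with the associated graded structure. Let $\cB\subseteq\cA$ denote the linear span of all iterated Wick products $:\partial^{k_1}a_{i_1}(z)\cdots\partial^{k_m}a_{i_m}(z):$ with $m\geq 0$, $i_1,\dots,i_m\in I$, and $k_1,\dots,k_m\geq 0$; by definition $\cB$ is exactly the subspace strongly generated by $\{a_i(z)\}$, and $1\in\cB$ (the empty product). I want to show $\cB=\cA$, and since $\cA=\bigcup_{d\geq 0}\cA_{(d)}$ it suffices to prove $\cA_{(d)}\subseteq\cB$ for every $d\geq 0$.

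First I would record the two structural facts that drive everything. By (\ref{goodi}), the Wick product $\circ_{-1}$ sends $\cA_{(k)}\times\cA_{(l)}$ into $\cA_{(k+l)}$ and, by the very construction of $\text{gr}(\cA)$, induces the supercommutative product there; equivalently $\phi_{k+l}(:u(z)v(z):)=\phi_k(u(z))\cdot\phi_l(v(z))$ for $u(z)\in\cA_{(k)}$, $v(z)\in\cA_{(l)}$. Likewise $\partial u=u\circ_{-2}1$, so (\ref{goodi}) with $1\in\cA_{(0)}$ shows $\partial$ preserves each $\cA_{(d)}$ and descends to the degree-zero derivation $\partial$ on $\text{gr}(\cA)$, whence $\phi_d(\partial^k u(z))=\partial^k\phi_d(u(z))$. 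Granting the hypothesis $\phi_{d_i}(a_i(z))=a_i$, these two facts combine to give, for any monomial, $\phi_d\bigl(:\partial^{k_1}a_{i_1}(z)\cdots\partial^{k_m}a_{i_m}(z):\bigr)=\partial^{k_1}a_{i_1}\cdots\partial^{k_m}a_{i_m}$ whenever $d_{i_1}+\cdots+d_{i_m}=d$, the right-hand side being the corresponding monomial in $\text{gr}(\cA)$.

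The induction then runs cleanly. The base case $\cA_{(0)}=\mathbb{C}\subseteq\cB$ is immediate. Assuming $\cA_{(d-1)}\subseteq\cB$, I would take $a(z)\in\cA_{(d)}$ and look at its symbol $\phi_d(a(z))$, a homogeneous degree-$d$ element of $\text{gr}(\cA)$. Since $\{a_i\}$ generate $\text{gr}(\cA)$ as a $\partial$-ring, $\phi_d(a(z))$ is a linear combination of monomials $\partial^{k_1}a_{i_1}\cdots\partial^{k_m}a_{i_m}$ with $d_{i_1}+\cdots+d_{i_m}=d$. Lifting each monomial to the normally ordered polynomial $:\partial^{k_1}a_{i_1}(z)\cdots\partial^{k_m}a_{i_m}(z):$ and taking the same linear combination produces an element $b(z)\in\cB$; by (\ref{goodi}) it lies in $\cA_{(d)}$, and by the computation above $\phi_d(b(z))=\phi_d(a(z))$. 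Hence $a(z)-b(z)\in\ker\phi_d=\cA_{(d-1)}\subseteq\cB$, so $a(z)\in\cB$, closing the induction and yielding $\cA=\cB$.

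The only genuine subtlety, and the step I would treat most carefully, is the well-definedness of the lift $b(z)$: the iterated Wick product is neither commutative nor associative in $\cA$, so a single monomial in $\text{gr}(\cA)$ corresponds to many distinct normally ordered expressions. This is harmless precisely because only the \emph{leading symbol} must match: any two orderings or bracketings of the same letters have equal symbols in the supercommutative ring $\text{gr}(\cA)$ and therefore differ by an element of strictly lower filtration degree. Thus any fixed choice of ordering furnishes a valid $b(z)$, and the lower-degree discrepancy is swept up by the inductive hypothesis.
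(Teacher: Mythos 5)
Your proof is correct; the paper itself states Lemma \ref{reconlem} without proof, citing \cite{LL}, and your induction on filtration degree --- lifting the symbol to a normally ordered monomial, matching leading terms via the compatibility of $\circ_{-1}$ and $\partial$ with $\text{gr}(\cA)$, and absorbing the lower-degree discrepancy into the inductive hypothesis --- is exactly the standard argument given there. Your closing remark on the well-definedness of the lift (different orderings and bracketings agree modulo $\cA_{(d-1)}$ because $\text{gr}(\cA)$ is supercommutative) correctly identifies and resolves the only delicate point.
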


For example, the $\beta\gamma$ system $\cS = \cS(V)$ has such a filtration, where $\cS_{(r)}$ is defined to be the linear span of \begin{equation} \label{goodsv} \{:\partial^{k_1} \beta^{x_1} \cdots \partial^{k_s}\beta^{x_s}\partial^{l_1} \gamma^{y'_1}\cdots \partial^{l_t}\gamma^{y'_t}:|\ x_i\in V,~y'_i\in V^*,\  k_i,l_i\geq 0,~ s+t \leq r\}.\end{equation} Then $\cS\cong \text{gr}(\cS)$ as linear spaces, and as a commutative algebra, we have \begin{equation} \label{structureofgrs} \text{gr}(\cS)\cong \text{Sym}  \bigoplus_{k\geq 0} (V_k\oplus V^*_k),\ \ \ \ \ V_k = \{\beta^{x}_k |~ x\in V\},\ \ \ \ \ V^*_k = \{\gamma^{x'}_k |\  x'\in V^*\}.\end{equation} Here $\beta^{x}_k$ and $\gamma^{x'}_k$ are the images of $\partial^k \beta^{x}(z)$ and $\partial^k\gamma^{x'}(z)$ in $\text{gr}(\cS)$ under the projection $\phi_1: \cS_{(1)}\ra \cS_{(1)}/\cS_{(0)}\subset \text{gr}(\cS)$. Similarly, $\cE = \cE(V)$ admits such a filtration where $\cE_{(r)}$ is spanned by the iterated Wick products of $b^{x}, c^{x'}$, and their derivatives, of length at most $r$. As above, we have $ \cE \cong \text{gr}(\cE)$ as linear spaces and \begin{equation} \label{structureofgre}\text{gr}(\cE) \cong \bigwedge \bigoplus_{k\geq 0} (V_k \oplus V^*_k)\ \ \ \ \ V_k = \{b^{x}_k |~ x\in V\},\ \ \ \ \ V^*_k = \{c^{x'}_k |\  x'\in V^*\}\end{equation} as supercommutative algebras. These filtrations are important because they allow the description of a commutant to be reduced to a problem in commutative algebra.

\section{A free field realization of $\gp\gg\gl(n|n)$}
For $n\geq 1$, the Lie superalgebra $\gg = \gp\gg\gl(n|n)$ has a grading $$\gg = \gg_{-1} \oplus \gg_0 \oplus \gg_{1},$$ where the odd 
subalgebras $\gg_{-1}$ and $\gg_1$ both have dimension $n^2$, and the even subalgebra $\gg_0 = \gs\gl_n \oplus \gs\gl_n \oplus \gg\gl_1$. 
The corresponding affine vertex superalgebra at critical level $V_0(\gg)$ (example \ref{ex:pgl}) has a free field realization as a subalgebra of the $bc\beta\gamma$ system $\cW= \cE \otimes \cS$ associated to the vector space $V$ of $n\times n$ complex matrices. We work in the basis $x_{ab}$ for $V$ and dual basis $x'_{ab}$ for $V^*$, for $a,b = 1,\dots, n$. We denote the generators $\beta^{x_{ab}}$, $\gamma^{x'_{ab}}$, $b^{x_{ab}}$, $c^{x'_{ab}}$ for $\cW$ by  $\beta^{ab}$, $\gamma^{ab}$, $b^{ab}$, $c^{ab}$, respectively. We use the convention that repeated indices are always summed over. The free field realization of $V_0(\gg)$ is given as follows:
\begin{equation}\nonumber
\begin{split}
F_-^{ab} &= -b_{ba}\,,\\
E_+^{ab} &= B^{ab}_+ - :b^{bc} c^{ac}:\,,\qquad\qquad\ \ \ \text{where}\ \  B^{ab}_+ = -:\beta^{ac} \gamma^{bc}:\,,\\
E_-^{ab} &= B^{ab}_- + :b^{ca} c^{cb}:\,,\qquad\qquad\ \ \ \text{where}\ \  B^{ab}_- = :\gamma^{ca} \beta^{cb}:\,,\\
F_+^{ab} &= -:c^{cb} B_+^{ac}: - :c^{ac} B_-^{cb}: - :b^{cd} c^{cb} c^{ad}:\,.
\end{split}
\end{equation}
Note that $\sum_{a}(E_+^{aa}+E_-^{aa})=0$. The realization is very similar to a realization of the affine vertex superalgebra 
of $\gg\gl(n|n)$ \cite{CR}.  
Also note that the elements $B_{\pm}^{ab}$ generate $V(\gg\gl_1)$ tensored with two commuting copies of $V_{-n}(\gs\gl_n)$, which has critical level, acting on $\cS$. The derived subalgebra $\tilde{\gg} = [\gg,\gg]$ is $\gp\gs\gl(n|n)$, which has a grading  $$\tilde{\gg} = \tilde{\gg}_{-1} \oplus \tilde{\gg}_0 \oplus \tilde{\gg}_{1},$$ where $\tilde{\gg}_{\pm 1} = \gg_{\pm 1}$ and $\tilde{\gg}_0 = \gs\gl_n \oplus \gs\gl_n$. Let $\cA$ be the image of $V_0(\gp\gs\gl(n|n))$ inside $\cW$, and let $\cB$ be the image of $V_{-n}(\gs\gl_n) \otimes V_{-n}(\gs\gl_n)$ inside $\cS$. 

\begin{lemma} The commutant $\cC = \text{Com}(\cA, \cW)$ coincides with $\text{Com}(\cB, \cS)$, and in particular is purely bosonic.
\end{lemma}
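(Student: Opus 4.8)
The plan is to prove the two inclusions $\text{Com}(\cB,\cS)\subseteq\cC$ and $\cC\subseteq\text{Com}(\cB,\cS)$ separately; since $\text{Com}(\cB,\cS)$ sits inside $\cS$ by construction, the assertion that $\cC$ is purely bosonic will be an automatic consequence of the equality. Throughout I use that $\cC=\text{Com}(\cA,\cW)$ is cut out by requiring commutation with the generators of $\cA$, namely the odd fields $F_-^{ab}$ and $F_+^{ab}$ together with the traceless parts of the even fields $E_\pm^{ab}$ (recall $\tilde\gg_0=\gs\gl_n\oplus\gs\gl_n$ consists of the traceless $E_\pm$). A computation I would record first is that, because $\sum_a(E_+^{aa}+E_-^{aa})=0$ and the $bc$-corrections in $E_\pm^{aa}$ cancel upon summation, one has $\sum_a(B_+^{aa}+B_-^{aa})=0$ identically in $\cS$; this makes the trace directions invisible.

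For the inclusion $\text{Com}(\cB,\cS)\subseteq\cC$, take $\phi\in\text{Com}(\cB,\cS)\subseteq\cS$. Such $\phi$ commutes with the entire $bc$ factor $\cE$, hence automatically with $F_-^{ab}=-b^{ba}$, and since the only $\cS$-contribution to $E_\pm^{ab}$ is $B_\pm^{ab}$, it commutes with the traceless $E_\pm^{ab}$. For $F_+^{ab}$ the $c$-fields pass through the bracket, giving
\[ [F_+^{ab}(z),\phi(w)]=-:c^{cb}(z)[B_+^{ac}(z),\phi(w)]:-:c^{ac}(z)[B_-^{cb}(z),\phi(w)]:. \]
Comparing coefficients of the linearly independent free fields $c^{pq}$ shows that this vanishes exactly when $\phi$ commutes with every off-diagonal $B_\pm^{pq}$ and with each $B_+^{aa}+B_-^{bb}$. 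The off-diagonal conditions hold since $\phi\in\text{Com}(\cB,\cS)$, and $B_+^{aa}+B_-^{bb}$ equals a traceless (hence $\cB$-) element plus $\tfrac1n\sum_c(B_+^{cc}+B_-^{cc})=0$, so $\phi$ commutes with it as well. Therefore $\phi\in\cC$.

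For the reverse inclusion it suffices to prove that $\cC$ is purely bosonic, after which the equality is immediate: a bosonic $\phi\in\cC$ commutes with the traceless $E_\pm$, hence with $\cB$. First, commuting with $F_-^{ab}=-b^{ba}$ confines $\cC$ to the common kernel of the derivations $b^{ab}\circ_k$, $k\ge0$; as these act nontrivially only by contracting $c$-fields, that kernel is $\cE_b\otimes\cS$, where $\cE_b\subseteq\cE$ is the abelian subalgebra generated by the $b^{ab}$. (Rigorously this is read off in $\text{gr}(\cW)$, where the joint kernel of the contraction derivations is the subring containing no $c_k$ variables.) Because every generator of $\cA$ is homogeneous for the $U(1)$-charge assigning $+1$ to each $b$ and $-1$ to each $c$, the commutant is charge-graded, $\cC=\bigoplus_{j\ge0}\cC_j$ with $\cC_j\subseteq(\cE_b)_j\otimes\cS$, and it remains to show $\cC_j=0$ for $j\ge1$ using invariance under $F_+$.

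The hard part will be this last step, and the main obstacle is the cubic term $-:b^{cd}c^{cb}c^{ad}:$ of $F_+^{ab}$. For $\phi\in(\cE_b)_j\otimes\cS$ the bracket $[F_+^{ab}(z),\phi(w)]$ splits by total fermion number into pieces of degree $j+1$ and $j-1$, each of which must vanish. I would isolate the top piece (fermion degree $j+1$, one free $c$ and $j$ fields $b$): the linear terms $:c^{cb}B_+^{ac}:$ and $:c^{ac}B_-^{cb}:$ reach it by retaining all $j$ of $\phi$'s $b$-fields while pairing $B_\pm$ against the $\beta\gamma$-content, whereas the cubic term reaches the same degree only after contracting one of its $c$'s against a $b$ of $\phi$, so it carries only $j-1$ of $\phi$'s $b$-fields together with a bare $b^{cd}$. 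These two contributions a priori occupy the same graded piece, which is precisely what blocks a one-line conclusion. I expect to disentangle them by an induction on $j$: the cubic contribution is governed by configurations with strictly fewer $\phi$-$b$ factors, so once it is quotiented out the vanishing of the remaining $j$-linear part forces the leading $b$-coefficient of $\phi$ to be annihilated by a nondegenerate pairing against the free $c^{pq}$, whence $\phi=0$. A cleaner alternative, which I would ultimately prefer, is to pass to $\text{gr}(\cW)$ and invoke the classical invariant theory of the $\gp\gs\gl(n|n)[t]$-action in the framework of \cite{LSSII}: there the joint invariants under the odd derivations coming from $F_\pm$ collapse the exterior (fermionic) variables entirely, identifying $(\text{gr}\,\cW)^{\tilde\gg[t]}$ with $(\text{gr}\,\cS)^{\cB}$, and the lemma then lifts via the reconstruction property (Lemma \ref{reconlem}) together with a graded-character comparison.
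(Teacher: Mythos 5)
Your first inclusion $\text{Com}(\cB,\cS)\subseteq\cC$ is fine (and more careful than the paper, which dismisses it as clear): the observation that $\sum_a(B_+^{aa}+B_-^{aa})=0$ is exactly what makes the diagonal contributions to $F_+^{ab}\circ_n\phi$ cancel. Your reduction of the reverse inclusion to showing $\cC\subseteq\cS$, the elimination of all $c$-dependence by commuting with $F_-^{ab}=-b_{ba}$, and the fermionic-charge grading of $\cC$ all match the paper's proof. The problem is the step you yourself flag as ``the hard part,'' namely $\cC_j=0$ for $j\geq 1$: neither of your two proposed routes closes it. The induction on $j$ does not work as described, because the linear terms $:c^{cb}B_+^{ac}:$ acting on all $j$ of the $b$-factors of $\phi$ and the cubic term $-:b^{cd}c^{cb}c^{ad}:$ acting by contracting one $c$ against a $b$ of $\phi$ both land in the \emph{same} fermionic degree ($j$ $b$'s and one $c$), so there is no charge grading in which the cubic contribution can be ``quotiented out''; and the concluding appeal to ``a nondegenerate pairing against the free $c^{pq}$'' is not an argument. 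The fallback to \cite{LSSII} is also unavailable: Theorem \ref{jetthm} and the surrounding framework require a connected \emph{reductive} group acting on an ordinary vector space, and there is no analogous classical statement for the arc-space invariants of the non-reductive supergroup $PSL(n|n)$ acting on $\cW$ --- if there were, the lemma would be immediate and the paper would not need a separate argument.

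What is actually needed, and what the paper supplies, is a leading-term device that separates the two contributions you correctly identified as interfering. The key observations are: (i) the linear terms of $F_+^{ab}$ raise the $\beta\gamma$-degree of the $\cS$-part by $2$, whereas the cubic term raises it by $0$, so ordering terms of $\omega$ by the degree of their $\beta\gamma$-content $P_{CBK}$ makes the cubic term subleading; and (ii) within a fixed degree, an auxiliary ``height'' grading on $\text{gr}(\cS)$ (with $\text{ht}(\gamma^{aa})=a$, $\text{ht}(\beta^{a+1,a})=n+a$, all other generators of height $0$) lets one choose $a=c+1$ (or $a=n$ when $c=n$) and a mode $k$ so that $F_+^{ab}\circ_k\omega$ contains a single monomial $:\beta^{c+1,c}\gamma^{cc}W:$ of strictly maximal degree and height, which therefore cannot cancel since $\text{gr}(\cS)$ is a domain. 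This extremal-monomial argument is the missing idea; without it (or an equivalent substitute) your proof does not establish that $\cC$ is purely bosonic.
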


\begin{proof} Clearly $\text{Com}(\cB, \cS)\subset \cC$, and to prove the opposite inclusion, it is enough to prove that $\cC\subset \cS$, since $\cS \cap \cC=\text{Com}(\cB,\cS)$. First, $\cW$ is graded by fermionic charge, which is just the eigenvalue of the zero mode of $F = -\sum_{a,b=1}^n :b^{ab} c^{ab}:$. Each $b^{ab}$ and $c^{ab}$ have fermionic charge $-1$ and $1$, respectively. Since $E^{ab}_{\pm}$, $F^{ab}_+$, and $F^{ab}_-$ are homogeneous of fermionic charge $0$, $1$, and $-1$, respectively, $\cC$ is graded by fermionic charge. 

Let $\omega \in \cC$ have homogeneous fermionic charge. Since $\omega$ commutes with $F^{ba}_-  = b^{ab}$ for all $a,b = 1,\dots,n$, $\omega$ does not depend on the vertex operators $c^{ab}$ and their derivatives. Therefore $\omega \in (\bra b \ket \otimes \cS)^{\gg[t]}$ where $\bra b \ket$ is the vertex algebra generated by the $b^{ab}$, and thus has fermionic charge $-r$ for some $r\geq 0$. We need to show that $r=0$, which proves that $\omega \in \cS$.

Recall that $\text{gr}(\cS) = \mathbb{C}[\beta^{ab}_k,\gamma^{ab}_k]$ is graded by degree, where each $\beta^{ab}_k, \gamma^{ab}_k$ has degree $1$. Define an auxiliary gradation on $\text{gr}(\cS)$ called {\it height} as follows: $$\text{ht}(\gamma^{aa}) = a,\ \ \ \ \ \text{ht}(\beta^{a+1,a}) = n+a,$$ and $\text{ht}(\gamma^{ab})=0 = \text{ht}(\beta^{cd})$ for $b\neq a$ and $c\neq d+1$. Given a vertex operator $\alpha  \in \cS_{(d)}\setminus \cS_{(d-1)}$ of degree $d$, define $\text{ht}(\alpha) = \text{ht}( \phi_d(\alpha))$ where $\phi_d: \cS_{(d)} \rightarrow \cS_{(d)} / \cS_{(d-1)}\subset \text{gr}(\cS)$ is the usual projection. Write $\omega = \omega_0 + \omega_1$, where $$\omega_0 = \sum_{CBK} :\partial^{k_1} b^{c_1, b_1}\cdots\partial^{k_r} b^{c_r, b_r} P_{CBK}:,\ \ \ \ \ C = \{c_1,\dots, c_r\},\ \ \ B = \{b_1,\dots, b_r\},\  \ \ \ K = \{k_1,\dots, k_r\},$$ with $\text{deg}(P_{CBK}) = e$ and $\text{ht}(P_{CBK}) = h$. Assume that $\omega_0$ is the "leading term" of $\omega$ in the sense that all terms appearing in $\omega_1$ are of the form $: \partial^{k'_1} b^{c'_1, b'_1}\cdots \partial^{k'_r} b^{c'_r, b'_r} P':$ with either $\text{deg}(P')<e$ or $\text{deg}(P') = e$ and $\text{ht}(P')<h$.

Let $c$ be the largest integer appearing in the list $C$ above, and let $b$ be the largest integer such that $b^{cb}$ or any of its derivatives appears in $\omega_0$. Let $k$ be the largest integer for which $\partial^k b^{cb}$ appears, and write $\omega_0 = :\partial^k b^{cb} (W): + W'$, where $W'$ does not depend on $\partial^k b^{cb}$. Suppose first that $1\leq c<n$.

Let $a = c+1$ and act on $\omega$ by $F_+^{ab} \circ_k$. Recall that $$F_+^{ab} = :c^{cb} \beta^{ad} \gamma^{cd}: - : c^{ac} \gamma^{dc} \beta^{db}: + \alpha,$$ where $\alpha\in\cE$ and can be disregarded, since it cannot raise the degree. 

There will be a term in $F_+^{ab} \circ_k \omega$ of the form $:\beta^{c+1,c} \gamma^{c,c}W:$, which has degree $e+2$ and height $h+n+2c$. It is easy to see that no other terms of the same degree and height can occur in $F^{ab}_+\circ_k \omega$. Moreover, since $\text{gr}(\cS)$ is an integral domain, the image of $:\beta^{c+1,c} \gamma^{c,c}W:$ in $\text{gr}(\cW)$ is nonzero. This contradicts the fact that $\omega \in \cC$.

Next, suppose that $c = n$. Let $a = n$, and act as above by $F^{ab}_+\circ_k$. We see that $F^{ab}\circ_k \omega$ will contain the term $:\beta^{n,n-1} \gamma^{n,n-1} W:$, which has degree $e+2$ and height $h+2n-1$ and cannot be canceled by any other term. As above, this contradicts $\omega \in\cC$. It follows that $r=0$ and $\omega \in \cS$ as claimed. \end{proof}

\section{Commutants inside the $\beta\gamma$ system}
 
We are thus led to the problem of computing $\text{Com}(\cB, \cS)$, which is exactly the type of commutant problem considered in \cite{LSSII}. Let $G$ be a connected, reductive group with Lie algebra $\gg$, and let $V$ be a finite dimensional representation of $V$. The induced map $\rho: \gg\rightarrow \text{End}(V)$ induces a vertex algebra homomorphism $\hat{\tau}: V_1(\gg,B) \ra \cS =  \cS(V)$ given by
\begin{equation} \label{deftheta} \hat{\tau}(X^{\xi}(z)) = \theta^{\xi}(z) = - \sum_{i=1}^n~:\gamma^{x'_i}(z)\beta^{\rho(\xi)(x_i)}(z):~.\end{equation}
Here $B$ is the bilinear form $B(\xi,\eta) = -\text{tr}(\rho(\xi)\rho(\eta))$, and  $\{x_1,\dots, x_n\}$ is a basis for $V$, with dual basis $\{x'_1,\dots, x'_n\}$ for $V^*$. Let $\Theta$ be the vertex algebra generated by $\theta^{\xi}, \xi\in\gg$. The commutant $\text{Com}(\Theta, \cS)$, which coincides with $\cS^{\gg[t]}$, will be called the algebra of {\it invariant chiral differential operators} on $V$. It is analogous to the classical ring $\cD^G$ of invariant differential operators. In this notation, $\cD = \cD(V)$ is the Weyl algebra with generators $x'_i, \frac{\partial}{\partial x'_i}$ satisfying $[\frac{\partial}{\partial x'_i},x'_j] = \delta_{i,j}$. 
Equip $\cD$ with the Bernstein filtration \begin{equation}\label{bernstein}\cD_{(0)}\subset \cD_{(1)}\subset  \cdots,\end{equation} defined by $(x'_1)^{k_1} \cdots (x'_n)^{k_n} (\frac{\partial}{\partial x'_1})^{l_1}\cdots (\frac{\partial}{\partial x'_n})^{l_n} \in \cD_{(r)}$ if $k_1 + \cdots +k_n + l_1 + \cdots +l_n \leq r$. Given $\omega\in \cD_{(r)}$ and $\nu\in\cD_{(s)}$, $[\omega,\nu]\in \cD_{(r+s-2)}$, so that \begin{equation} \label{isodi}\text{gr}(\cD) = \bigoplus_{r>0} \cD_{(r)} / \cD_{(r-1)} \cong \text{Sym} (V\oplus V^*).\end{equation} We say that $\text{deg}(\alpha) = d$ if $\alpha\in\cD_{(d)} \setminus \cD_{(d-1)}$. 

The action of $G$ on $V$ induces a Lie algebra homomorphism \begin{equation} \label{defoftau} \tau:\gg\ra \cD,\ \ \ \ \  \xi\mapsto - \sum_{i=1}^n x'_i \rho(\xi)\big(\frac{\partial}{\partial x'_i}\big),\end{equation} which is analogous to (\ref{deftheta}). Given $\xi\in\gg$, $\tau(\xi)$ is just the vector field on $V$ generated by $\xi$, and $\xi$ acts on $\cD$ by $[\tau(\xi),-]$. We can extend $\tau$ to a map $U(\gg)\ra \cD$, and $\cD^G=\text{Com}(\tau(U(\gg)),\cD)$ since $G$ is connected. Moreover, $G$ preserves the filtration on $\cD$, so (\ref{bernstein}) restricts to a filtration
$\cD^{G}_{(0)}\subset \cD^{G}_{(1)} \subset \cdots$ on $\cD^{G}$, and $$\text{gr}(\cD^{G}) \cong \text{gr}(\cD)^{G} \cong \text{Sym} (V\oplus V^*)^{G}.$$

Recall that $V_1(\gg,B)$ and $\cS$ are related via the Zhu functor to $U(\gg)$ and $\cD$, respectively. We have commutative diagrams
\begin{equation}\label{commdiagfirst} \begin{array}[c]{ccc}
V_1(\gg,B) &\stackrel{}{\rightarrow}& \cS  \\
\downarrow\scriptstyle{\pi_{\text{Zhu}} }&&\downarrow\scriptstyle{\pi_{\text{Zhu}}}\\
U(\gg) &\stackrel{}{\rightarrow}& \cD
\end{array},\ \ \ \ \ \ \ \ \ \ \   \begin{array}[c]{ccc}
\cS^{\gg[t]}&\stackrel{}{\hookrightarrow}& \cS  \\
\downarrow\scriptstyle{\pi}&&\downarrow\scriptstyle{\pi_{\text{Zhu}}}\\
\cD^{G} &\stackrel{}{\hookrightarrow}& \cD
\end{array}.\end{equation} The map $V_1(\gg,B) \ra \cS$ is $\hat{\tau}$, and the map $U(\gg) \ra \cD$ coincides with $\tau$ up to modification by a scalar (i.e., an element of degree zero in $\cD$) for central elements of $\gg$. The map $\pi$ is just the restriction of the Zhu map on $\cS$.

\section{Jet schemes}
There is a well-known connection between vertex algebras and jet schemes: for any affine variety $X$, the ring of polynomial functions on the infinite jet scheme or arc space $X_{\infty}$, has the structure of an abelian vertex algebra \cite{BD}. Conversely, the $\partial$-ring $\text{gr}(\cA)$ of a vertex algebra $\cA\in \cR$ can often be realized as the ring of polynomial functions $\cO(X_{\infty})$ for some $X$. In our main example, $\text{gr}(\cS)$ is isomorphic to $\cO((V\oplus V^*)_{\infty})$, and $\text{gr}(\cS^{\gg[t]})\cong \cO(X_{\infty})$ where $X$ is the categorical quotient $(V\oplus V^*)/\!\!/G = \text{Spec}(\cO(V\oplus V^*)^G)$. More generally, whenever $\text{Spec}(\text{gr}(\cA))$ can be realized as $X_{\infty}$ for some $X$, the geometry of $X_{\infty}$ encodes information about the vertex algebra structure of $\cA$.

First, we recall some basic facts about jet schemes, following the notation in \cite{Mu}. Let $X$ be an irreducible scheme of finite type over $\mathbb{C}$. For each integer $m\geq 0$, the $m$th jet scheme $X_m$ is determined by its functor of points: for every $\mathbb{C}$-algebra $A$, we have a bijection
$$\text{Hom}(\text{Spec} (A), X_m) \cong \text{Hom}(\text{Spec} (A[t]/\langle t^{m+1}\rangle ), X).$$ Thus the $\mathbb{C}$-valued points of $X_m$ correspond to the $\mathbb{C}[t]/\langle t^{m+1}\rangle$-valued points of $X$. If $m>p$, we have projections $\pi_{m,p}: X_m \rightarrow X_p$ which are compatible when defined: $\pi_{m,p} \circ \pi_{q,m} = \pi_{q,p}$. Clearly $X_0 = X$ and $X_1$ is the total tangent space $\text{Spec}(\text{Sym} (\Omega_{X/\mathbb{C}}))$. The assignment $X\mapsto X_m$ is functorial, and a morphism $f:X\ra Y$ induces $f_m: X_m \ra Y_m$ for all $m\geq 1$. If $X$ is nonsingular, $X_m$ is irreducible and nonsingular for all $m$. Moreover, if $X,Y$ are nonsingular and $f:Y\ra X$ is a smooth surjection, $f_m$ is surjective for all $m$. 

If $X=\text{Spec}(R)$ where $R= \mathbb{C}[y_1,\dots,y_r] / \langle f_1,\dots, f_k\rangle$, we can find explicit equations for $X_m$. Define new variables $y_j^{(i)}$ for $i=0,\dots, m$, and define a derivation $D$ by $D(y_j^{(i)}) = y_j^{(i+1)}$ for $i<m$, and $D(y_j^{(m)}) =0$. This specifies the action of $D$ on all of $\mathbb{C}[y_1^{(0)},\dots, y_r^{(m)}]$; in particular, $f_{l}^{(i)} = D^i ( f_{l})$ is a well-defined polynomial in  $\mathbb{C}[y_1^{(0)},\dots, y_r^{(m)}]$. Letting $R_m = \mathbb{C}[y_1^{(0)},\dots, y_r^{(m)}] / \langle f_1^{(0)},\dots, f_k^{(m)}\rangle$, we have $X_m \cong \text{Spec} (R_m)$. By identifying $y_j$ with $y_j^{(0)}$, we see that $R$ is naturally a subalgebra of $R_m$. 

The {\it arc space} of $X$ is defined to be $X_{\infty} = \lim_{\infty \leftarrow m} X_m$. If $X = \text{Spec}(R)$ as above, $X_{\infty}=\text{Spec}(R_{\infty})$ where $R_{\infty}  = \mathbb{C}[y_1^{(0)},\dots,y_j^{(i)},\dots] / \bra f_1^{(0)},\dots, f_l^{(i)},\dots\ket$. Here $i=0,1,2,\dots$ and $D (y^{(i)}_j) = y^{(i+1)}_j$ for all $i$. By a theorem of Kolchin \cite{Kol}, $X_{\infty}$ is irreducible whenever $X$ is irreducible.

Let $G$ be a connected, reductive complex algebraic group with Lie algebra $\gg$. For $m\geq 1$, $G_m$ is an algebraic group which is the semidirect product of $G$ with a unipotent group $U_m$. The Lie algebra of $G_m$ is $\gg[t]/t^{m+1}$. Given a linear representation $V$ of $G$, there is an action of $G$ on $\cO(V)$ by automorphisms, and a compatible action of $\gg$ on $\cO(V)$ by derivations, satisfying $$\frac{d}{dt} \text{exp} (t\xi) (f)|_{t=0} = \xi(f),\ \ \ \ \xi\in\gg,\ \ \ \ f\in \cO(V).$$ Choose a basis $\{x_1,\dots,x_n\}$ for $V^*$, so that $$\cO(V) \cong  \mathbb{C}[x_1,\dots,x_n],\ \ \ \ \ \ \cO(V_m) =  \mathbb{C}[x_1^{(i)},\dots,x_n^{(i)}|\ ,0\leq i\leq m].$$ Then $G_m$ acts on $V_m$, and the induced action of $\gg[t]/t^{m+1}$ by derivations on $\cO(V_m)$ is defined on generators by \begin{equation}\label{jetaction}\xi t^r (x_j^{(i)}) = \lambda^r_i \xi(x_j)^{(i-r)}, \ \ \ \ \ \ \lambda^r_i = \bigg\{ \begin{matrix} \frac{i!}{(i-r)!}  & 0\leq r\leq i \cr 0 & r>i \end{matrix}.\end{equation} Via the projection $\gg[t]\ra \gg[t]/t^{m+1}$, $\gg[t]$ acts on $\cO(V_m)$, and the invariant rings $\cO(V_m)^{\gg[t]}$ and $\cO(V_m)^{\gg[t]/t^{m+1}}$ coincide.

The relevance of these invariant rings to our vertex algebra commutant problem is as follows. The map (\ref{deftheta}) induces an action of $\gg[t]$ on $\text{gr}(\cS)$ by derivations of degree zero, defined on generators by 
\begin{equation}\label{actiontheta}\xi t^r(\beta^x_i) = \lambda^r_i \beta^{\rho(\xi)(x)}_{i-r},\ \ \ \ \ \ \xi t^r(\gamma^{x'}_i) = \lambda^r_i \gamma^{\rho^*(\xi)(x')}_{i-r}.\end{equation}
The derivation $\partial$ on $\text{gr}(\cS)$ is given by \begin{equation}\label{actionofpartial}\partial \beta^{x}_i = \beta^x_{i+1},\ \ \ \ \ \ \partial \gamma^{x'}_i = \gamma^{x'}_{i+1}.\end{equation} There is an injective map of $\partial$-rings \begin{equation}\label{injgamm} \text{gr}(\cS^{\gg[t]})\hookrightarrow \text{gr}(\cS)^{\gg[t]},\end{equation} which is in general not surjective. Let $R$ denote the image of \eqref{injgamm}, and suppose that $\{a_i| i\in I\}$ is a collection of generators for $R$ as a $\partial$-ring. By Lemma \ref{reconlem}, any collection of vertex operators $\{a_i(z)\in \cS^{\gg[t]}|~i\in I\}$ such that $d_i = \text{deg}(a_i)$ and $\phi_{d_i}(a_i(z)) = a_i$, is a strong generating set for $\cS^{\gg[t]}$ as a vertex algebra.

By (\ref{actiontheta}) and (\ref{jetaction}) the map $\Phi: \text{gr}(\cS) \ra \cO((V\oplus V^*)_{\infty})$ defined on generators by $$\beta^{x}_k \mapsto x^{(k)},\ \ \ \ \ \gamma^{x'}_k \mapsto (x')^{(k)},$$ is an isomorphism of $\gg[t]$-algebras. Moreover, $\Phi^{-1} D\circ \Phi = \partial$, so we have an isomorphism of differential graded algebras $\text{gr}(\cS)^{\gg[t]} \cong  \cO((V\oplus V^*)_{\infty})^{\gg[t]}$. Since $G$ is connected, $\cO((V\oplus V^*)_{\infty})^{\gg[t]} = \cO((V\oplus V^*)_{\infty})^{G_{\infty}}$, and so we obtain the following lemma.
\begin{lemma} $\text{gr}(\cS)^{\gg[t]} \cong \cO((V\oplus V^*)_{\infty})^{G_{\infty}}$ as differential graded algebras. \end{lemma}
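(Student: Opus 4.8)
The plan is to prove the statement as an immediate consequence of the properties of the map $\Phi: \text{gr}(\cS)\ra \cO((V\oplus V^*)_\infty)$ constructed just above it: I would show that $\Phi$ is simultaneously $\gg[t]$-equivariant and compatible with the differentials $\partial$ and $D$, and then pass to invariants, using connectedness of $G$ to replace the Lie-algebra action by the group action at each finite jet level.

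First I would record that $\Phi$ is an isomorphism of $\mathbb{Z}_{\geq 0}$-graded commutative algebras for the trivial reason that both $\text{gr}(\cS) = \text{Sym}\bigoplus_{k\geq 0}(V_k\oplus V^*_k)$ and $\cO((V\oplus V^*)_\infty)$ are polynomial rings, and $\Phi$ carries the free generating set $\{\beta^x_k,\gamma^{x'}_k\}$ bijectively onto the free generating set $\{x^{(k)},(x')^{(k)}\}$, preserving degree. It then suffices to check equivariance and the differential intertwining on generators, since every operator involved is a derivation.

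For the differential this is immediate from (\ref{actionofpartial}) and the definition of $D$: $\Phi(\partial\beta^x_i)=\Phi(\beta^x_{i+1})=x^{(i+1)}=D(x^{(i)})=D\Phi(\beta^x_i)$, and likewise for $\gamma$, so $D\circ\Phi=\Phi\circ\partial$, i.e. $\Phi^{-1}D\Phi=\partial$. For equivariance I would compare (\ref{actiontheta}) with (\ref{jetaction}) term by term: one has $\Phi(\xi t^r\cdot\beta^x_i)=\lambda^r_i(\rho(\xi)x)^{(i-r)}$, which matches $\xi t^r\cdot x^{(i)}$ once one notes that the derivation action of $\xi\in\gg$ on the coordinate attached to $x\in V$ is $\rho(\xi)$ and on the coordinate attached to $x'\in V^*$ is $\rho^*(\xi)$; the coefficients $\lambda^r_i$ are literally identical in the two formulas. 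Hence $\Phi$ is also an isomorphism of $\gg[t]$-modules, so it restricts to an isomorphism $\text{gr}(\cS)^{\gg[t]}\cong\cO((V\oplus V^*)_\infty)^{\gg[t]}$, and because it intertwines $\partial$ and $D$ this is an isomorphism of differential graded algebras.

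The remaining step, which I expect to require the most care, is the identification $\cO((V\oplus V^*)_\infty)^{\gg[t]}=\cO((V\oplus V^*)_\infty)^{G_\infty}$. I would argue at finite level: $G_m$ is connected, being the semidirect product of the connected group $G$ with the connected unipotent group $U_m$, and its Lie algebra is $\gg[t]/t^{m+1}$ acting by the derivations (\ref{jetaction}); for a connected algebraic group acting rationally on an affine scheme, a function is $G_m$-invariant if and only if it is annihilated by the Lie algebra, giving $\cO((V\oplus V^*)_m)^{G_m}=\cO((V\oplus V^*)_m)^{\gg[t]/t^{m+1}}$. Then I would pass to the limit: each $f\in\cO((V\oplus V^*)_\infty)$ lies in some $\cO((V\oplus V^*)_m)$, on which the $G_\infty$-action factors through $G_m$ and the $\gg[t]$-action factors through $\gg[t]/t^{m+1}$ (as $\lambda^r_i=0$ for $r>i$), so $G_\infty$-invariance, $G_m$-invariance, and $\gg[t]$-invariance of $f$ all coincide. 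Combining with the previous paragraph yields $\text{gr}(\cS)^{\gg[t]}\cong\cO((V\oplus V^*)_\infty)^{G_\infty}$ as differential graded algebras. The only genuine subtleties to track are the dual-representation conventions in the equivariance check and the connectedness-plus-limit argument in this last step.
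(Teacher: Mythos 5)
Your proposal is correct and follows essentially the same route as the paper: the paper likewise introduces the generator-level isomorphism $\Phi$, cites the matching of the formulas \eqref{actiontheta} and \eqref{jetaction} for equivariance, notes $\Phi^{-1}D\circ\Phi=\partial$, and invokes connectedness of $G$ to identify $\gg[t]$-invariants with $G_{\infty}$-invariants. You simply supply more detail than the paper does, in particular the finite-level-plus-limit justification of that last identification, which the paper leaves implicit.
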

 In general, it is a very subtle problem to find generators for rings of the form $\cO((U)_{\infty})^{G_{\infty}}$ for a $G$-representation $U$. There is a natural map \begin{equation} \label{arcmap} \cO((U/\!\!/G)_{\infty}) \rightarrow \cO(U_{\infty})^{G_{\infty}}\end{equation} which is not an isomorphism in general. We call $U$ {\it stable} if the general $G$-orbit is closed in $U$, and we call $U$ {\it coregular} if $U/\!\!/G$ is smooth, equivalently; $\cO(V)^G$ is a polynomial ring. The following result appears in \cite{LSSI}.
 
\begin{thm} \label{jetthm} Let $G$ be a connected, reductive group, and let $U$ be a $G$-representation which is stable and coregular, such that $\cO(U)$ contains no nontrivial one-dimensional $G$-invariant subspaces. Then \eqref{arcmap} is an isomorphism. In particular, if $\cO(U)^G$ is generated by polynomials $f_1,\dots, f_k$, $\cO(U_{\infty})^{G_{\infty}}$ is generated by $D^i(f_1),\dots,D^i(f_k)$, for $i\geq 0$.
\end{thm}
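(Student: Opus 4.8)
The plan is to reduce the statement about the arc space $U_\infty$ to the classical statement about $U/\!\!/G$ by comparing the two sides in each jet level $U_m$ and passing to the limit. Concretely, I would prove the isomorphism of $\cO((U/\!\!/G)_m)\to \cO(U_m)^{G_m}$ for each finite $m$ and then take the inverse limit, using that $\cO(U_\infty)^{G_\infty}=\varinjlim_m \cO(U_m)^{G_m}$ and $\cO((U/\!\!/G)_\infty)=\varinjlim_m \cO((U/\!\!/G)_m)$. The map \eqref{arcmap} is always injective, because the quotient morphism $\pi:U\to U/\!\!/G$ induces $\pi_m: U_m\to (U/\!\!/G)_m$ whose image is dense (the generic arc lies over a closed orbit by stability), so that the pullback on functions is injective. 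Hence the whole difficulty is surjectivity, i.e.\ showing every $G_m$-invariant function on $U_m$ descends through $\pi_m$.

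For surjectivity I would exploit coregularity. Since $\cO(U)^G=\mathbb{C}[f_1,\dots,f_k]$ is a polynomial ring, $U/\!\!/G=\text{Spec}(\cO(U)^G)$ is a smooth affine space, so its jet schemes are again affine spaces and in particular $(U/\!\!/G)_m$ is smooth and irreducible. The key geometric input is that, under the stability and coregularity hypotheses together with the absence of nontrivial one-dimensional $G$-invariants, the quotient morphism $\pi: U\to U/\!\!/G$ is flat (indeed, it is a smooth morphism over the open locus of closed orbits with trivial stabilizer components, and the complement has codimension $\geq 2$). Flatness of $\pi$, via the functoriality and base-change behavior of jet schemes, should give that $\pi_m$ is dominant with reduced and irreducible generic fiber, and that $\cO(U_m)$ is a free module over $\pi_m^*\cO((U/\!\!/G)_m)$ on which $G_m$ acts fiberwise. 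Invariants then descend: an invariant in $\cO(U_m)$ is constant along generic fibers of $\pi_m$, hence pulled back from the base. I would make this precise by showing $\cO(U_m)^{G_m}$ and $\cO((U/\!\!/G)_m)$ have the same Hilbert series with respect to the degree grading, which reduces the surjectivity of an injective graded map to an equality of dimensions in each degree.

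The main obstacle is controlling the bad locus where $\pi$ fails to be smooth and verifying that its arc space is negligible. The condition that $\cO(U)$ contains no nontrivial one-dimensional $G$-invariant subspaces is precisely what rules out semi-invariants that would otherwise obstruct the descent of invariants at the jet level and would cause \eqref{arcmap} to miss generators; I would use it to guarantee that the unstable locus $U\setminus U^{\text{closed}}$ has codimension at least $2$ in $U$, so that by the codimension estimate for arc spaces of Mustata (the jets over a codimension-$\geq 2$ subset form a proper closed subset of $U_\infty$, since $U$ is smooth) the invariant functions are determined by their restriction to arcs through the good locus. Once surjectivity is established at each finite level, the final ``in particular'' clause follows immediately: by Theorem~\ref{jetthm} applied to $U/\!\!/G\cong \text{Spec}(\mathbb{C}[f_1,\dots,f_k])$, the arc space functions $\cO((U/\!\!/G)_\infty)$ are freely generated by the $D^i(f_j)$, and the isomorphism \eqref{arcmap} transports these to generators $D^i(f_j)$ of $\cO(U_\infty)^{G_\infty}$.
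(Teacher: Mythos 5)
First, a point of order: the paper does not prove Theorem~\ref{jetthm} at all --- it is quoted from \cite{LSSI} --- so there is no internal proof to compare against, and I can only assess your argument on its own terms. Your skeleton (work level by level on $U_m$, get injectivity of \eqref{arcmap} from dominance of $\pi_m$ onto the irreducible scheme $(U/\!\!/G)_m$, get surjectivity by descending $G_m$-invariants through $\pi_m$, then pass to the limit) is a reasonable shape, and the injectivity half and the final ``in particular'' clause are essentially fine. The problem is that each step of the surjectivity half --- which is the entire content of the theorem --- rests on a claim that is either unjustified or false within the theorem's hypotheses. (i) Flatness of $\pi:U\to U/\!\!/G$ does not follow from ``smooth off a codimension-$\geq 2$ set'' (a non sequitur), and it genuinely fails in scope: for $SL_2$ acting on three copies of $\mathbb{C}^2$ the representation is stable and coregular with no semi-invariants, yet the null cone has dimension $4$ while the generic fiber has dimension $3$, so $\pi$ is not equidimensional and hence not flat. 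Flatness of the quotient map is the \emph{cofree} condition, strictly stronger than coregularity; and even where $\pi$ is flat, jet functors do not preserve flatness, so nothing passes automatically to $\pi_m$. (ii) Your proposed use of the one-dimensional-invariant-subspace hypothesis --- to force the non-closed-orbit locus to have codimension $\geq 2$ --- is also false in scope: for $SL_2$ on $\mathbb{C}^2\oplus\mathbb{C}^2$ all hypotheses hold, yet the set of points with non-closed orbit is $\{d=0\}\setminus\{0\}$, of codimension $1$, and the non-principal locus in $U/\!\!/G\cong\mathbb{A}^1$ is a point, again of codimension $1$, so the ``extend across codimension $\geq 2$'' mechanism is unavailable precisely in examples the theorem must cover. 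The actual role of that hypothesis is different: it excludes semi-invariants with nontrivial character (an issue only when $G$ has a central torus), which otherwise produce $G_\infty$-invariants at positive jet level that are not pulled back from the quotient, as happens for $\mathbb{C}^*$ acting on $\mathbb{C}^2$ with weights $\pm 1$.

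Beyond these two incorrect inputs, the central assertion --- that a $G_m$-invariant regular function is constant on generic fibers of $\pi_m$ and therefore descends --- is exactly what needs proof and is only asserted. At level $m=0$ it follows from stability; for $m\geq 1$ one must show the generic fiber of $\pi_m$ is a single $G_m$-orbit, which requires the Luna slice theorem (over the principal locus $\pi$ is a fiber bundle with fiber $G/H$) together with the identification of $(G/H)_m$ with $G_m/H_m$, and then a separate argument to extend the descended function across the non-principal locus of $(U/\!\!/G)_m$, which as noted can have codimension one. None of this machinery appears in your proposal. Finally, the fallback of comparing Hilbert series is not actionable: $G_m$ has a large unipotent radical, so there is no Molien--Weyl-type formula for the Hilbert series of $\cO(U_m)^{G_m}$, and computing it is equivalent to the problem being solved. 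In short: the reduction to finite levels and the injectivity are sound, but the surjectivity argument has no working mechanism, and the two geometric claims it leans on (flatness of $\pi$; codimension $\geq 2$ of the bad locus) both fail for representations satisfying the hypotheses.
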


\section{The main example}

We are interested in $\text{Com}(\Theta, \cS) = \cS^{\gg[t]}$ in the case where $\Theta$ is the image of $V_{-n} (\gs\gl_n) \otimes V_{-n} (\gs\gl_n)$ in $\cS = \cS(V)$, and $V$ is the space of $n\times n$ matrices. First, we consider the corresponding problem involving invariant differential operators. For $j=1,\dots, n$, let $W_j = \mathbb{C}^n$, with basis $\{x_{1j},\dots, x_{nj}\}$, so that $V = \bigoplus_{j=1}^n W_j$. The Weyl algebra $\cD = \cD(V)$ has generators $x'_{ij}, \frac{\partial}{\partial x'_{ij}}$, for $i,j=1,\dots,n$. The left and right actions of $GL_n$ on $V$ induce algebra homomorphisms \begin{equation} \label{taugln} \tau: U (\gg\gl_n)\ra \cD,\ \ \ \ \  \tau': U (\gg\gl_n) \ra \cD\end{equation} such that $\xi\in \gg\gl_n$ and $\eta\in \gg\gl_n$ act on $\cD$ by $[\tau(\xi),-]$ and $[\tau'(\eta),-]$, respectively. It is well known \cite{GW} that, for the left action of $GL_n$, $\cD^{GL_n} = \tau'(U (\gg\gl_n))$, and for the right action of $GL_n$, $\cD^{GL_n} = \tau(U( \gg\gl_n))$. Moreover, $\tau(U (\gg\gl_m))$ and $\tau'(U (\gg\gl_n))$ form a pair of mutual commutants inside $\cD$. It follows that $$\cD^{GL_n\times GL_n} = \tau(U (\gg\gl_n)) \cap \tau'(U (\gg\gl_n)),$$ which is isomorphic to the center $Z(\gg\gl_n) \subset U(\gg\gl_n)$, and is just the polynomial algebra $\mathbb{C}[c_1,\dots,c_n]$. Here $c_i = \tau(\zeta_i)$, where $\zeta_i$ is the $i$th Casimir in $Z(\gg\gl_n)$. Note that $c_i$ has degree $2i$ in the Bernstein filtration and $\tau(Z(\gg\gl_n)) = \tau'(Z(\gg\gl_n))$.

For $n\geq 2$, $\cD^{SL_n \times SL_n}$ is generated by $c_1,\dots, c_n$ together with the $n\times n$ determinants $d = \det[\frac{\partial}{\partial{x'_{ij}}}]$ and $d' = \det[x'_{ij}]$. This follows from Weyl's first fundamental theorem of invariant theory for the standard representation of $SL_n$ \cite{We}, which shows that $\cD^{SL_n}$ is generated by $\cD^{GL_n} \cong U(\gg\gl_n)$ together with $d,d'$, and the fact that $Z(\gg\gl_n)\cong \mathbb{C}[c_1,\dots,c_n]$. Moreover, $c_2,\dots,c_n$ lie in the center of $\cD^{SL_n \times SL_n}$. Let $\tilde{d}, \tilde{d}', \tilde{c}_1\dots \tilde{c}_n$ denote the corresponding elements of $\text{gr}(\cD^{SL_n\times SL_n}) \cong \cO(V\oplus V^*)^{SL_n\times SL_n}$. The ideal of relations among $\tilde{d}, \tilde{d}', \tilde{c}_1\dots \tilde{c}_n$ is generated by a single relation of degree $2n$ of the form \begin{equation} \label{gradedrel} \tilde{d} \tilde{d}' +p(\tilde{c}_1\dots \tilde{c}_n)=0,\end{equation} for some polynomial $p$. We can write down a formula for $p$ as follows. Recall that $\tilde{c}_k$ is the trace of $X^k$, where $X$ is the matrix whose $ij$th entry is $\tau(\xi_{ij})$. Clearly $\det(X) = \tilde{d} \tilde{d}'$. Moreover, the determinant of any $n\times n$ matrix
$X$ can be expressed in terms of traces of powers of $X$.
This formula is called the Newton-Girard formula. 
Let $E_0(X)=1$ and, for $1\leq m\leq n$, define recursively
\begin{equation}
E_m(X) \ = \ -\frac{(-1)^m}{m} \sum_{k=1}^m \text{tr}(X^k)E_{m-k}(X)\, .
\end{equation}
Then $\text{det}(X)=E_n(X)$.
In particular, $p$ contains a nontrivial multiple of $\tilde{c}_n$, so $\tilde{c}_n$ can be expressed as a polynomial in $\tilde{d}, \tilde{d}', \tilde{c}_1\dots \tilde{c}_{n-1}$. After eliminating $\tilde{c}_n$, there are no relations among $\tilde{d}, \tilde{d}', \tilde{c}_1\dots \tilde{c}_{n-1}$, so $\text{gr}(\cD)^{SL_n\times SL_n} = \cO(V\oplus V^*)^{SL_n\times SL_n}$ is a polynomial algebra. 

\begin{lemma} \label{eckapp} In the case where $V$ is the space of $n\times n$ matrices and $G = SL_n\times SL_n$, we have $$\cO((V\oplus V^*)_{\infty})^{\gg[t]} = \cO(((V\oplus V^*)/\!\!/G)_{\infty}).$$ \end{lemma}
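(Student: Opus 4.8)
The plan is to recognize the asserted equality as precisely the statement that the natural map \eqref{arcmap} is an isomorphism for $U = V\oplus V^*$ and $G = SL_n\times SL_n$, and then to verify the hypotheses of Theorem \ref{jetthm}. Three of these are immediate or already in hand. First, $SL_n\times SL_n$ is connected and reductive. Second, $\cO(U)$ contains no nontrivial one-dimensional $G$-invariant subspaces: since each $SL_n$ is perfect, its character group is trivial, so $G$ admits no nontrivial one-dimensional representations at all, and the condition holds vacuously. Third, coregularity of $U$ has essentially been checked in the discussion preceding the lemma, where $\cO(V\oplus V^*)^{SL_n\times SL_n}\cong \text{gr}(\cD)^{SL_n\times SL_n}$ was shown to be a polynomial algebra on $\tilde{d},\tilde{d}',\tilde{c}_1,\dots,\tilde{c}_{n-1}$ after eliminating $\tilde{c}_n$ via the Newton-Girard relation \eqref{gradedrel}; hence $U/\!\!/G$ is smooth.

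The only substantial point is to prove that $U = V\oplus V^*$ is \emph{stable}, i.e. that the generic $G$-orbit is closed. Writing a point of $U$ as a pair of matrices $(X,Y)$, the action is $(g,h)\cdot(X,Y) = (gXh^{-1}, hYg^{-1})$, and the invariant ring is generated by $d'=\det X$, $d=\det Y$, and $c_k = \text{tr}\big((XY)^k\big)$. I would take the generic locus to be those $(X,Y)$ with $X$ invertible and $M := XY$ regular semisimple, which is open and dense, and show that such an orbit $O = G\cdot(X,Y)$ is closed by proving that any point $(X_0,Y_0)\in\overline{O}$ already lies in $O$. Since the invariants are constant on $\overline{O}$, such a point satisfies $\det X_0 = \det X \neq 0$ and $\text{tr}(M_0^k)=\text{tr}(M^k)$ with $M_0 := X_0Y_0$; thus $X_0$ is invertible and $M_0$ is regular semisimple with the same eigenvalues as $M$, so $M_0 = pMp^{-1}$ for some $p\in GL_n$.

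The heart of the argument is then to produce $(g,h)\in SL_n\times SL_n$ carrying $(X,Y)$ to $(X_0,Y_0)$. Setting $h = X_0^{-1}gX$ forces the first matrix equation, and the second reduces to $gMg^{-1} = pMp^{-1}$, i.e. $p^{-1}g$ centralizes $M$. Since $M$ is regular semisimple its centralizer in $GL_n$ is a maximal torus $T\cong(\mathbb{C}^*)^n$, so $g = pz$ with $z\in T$ free. The determinant bookkeeping then closes the argument: one computes $\det h = \det g\cdot(\det X/\det X_0) = \det g$ because $\det X = \det X_0$, and since $\det$ maps $T$ onto $\mathbb{C}^*$ one may choose $z$ with $\det(pz)=1$, forcing $g,h\in SL_n$ simultaneously. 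Hence $(X_0,Y_0)\in O$ and $O$ is closed. I expect this determinant-matching step — which uses crucially that $\det X$ is itself one of the invariants held fixed on $\overline{O}$, together with the surjectivity of $\det$ on the centralizing torus — to be the main obstacle, and the place where the $SL_n\times SL_n$ structure (rather than the easier $GL_n\times GL_n$ situation) genuinely enters. With stability established, Theorem \ref{jetthm} applies and yields the claimed equality.
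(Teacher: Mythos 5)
Your proposal is correct, and its overall skeleton is exactly the paper's: both proofs reduce the lemma to checking the three hypotheses of Theorem \ref{jetthm}, and your treatment of two of them (no nontrivial one-dimensional invariant subspaces because $SL_n\times SL_n$ is semisimple, coregularity because $\cO(V\oplus V^*)^G$ was already shown to be polynomial) matches the paper verbatim in substance. The one place you genuinely diverge is stability: the paper disposes of it in a single sentence by invoking the known general fact that \emph{any} reductive-group representation of cotangent type $V\oplus V^*$ is stable, whereas you prove stability by hand for this particular representation, exhibiting the generic locus $\{(X,Y): \det X\neq 0,\ XY \text{ regular semisimple}\}$ and showing each such orbit coincides with the full fiber of the quotient map through it. Your explicit argument is sound --- the reduction $h=X_0^{-1}gX$, the identification of the ambiguity in $g$ with the centralizing torus of $M=XY$, and the determinant-matching via $\det X=\det X_0$ and surjectivity of $\det$ on the torus all check out --- and it has the virtue of being self-contained and of making visible where the $SL_n\times SL_n$ (as opposed to $GL_n\times GL_n$) structure enters. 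What it costs is length and generality: the paper's citation covers all cotangent-type representations at once and keeps the proof to three lines. Either route is acceptable; if you keep yours, you might still remark that stability of $V\oplus V^*$ holds in general, so the reader knows the explicit computation is illustrative rather than essential.
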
 

\begin{proof} It is known that any $G$-representation of the form $V\oplus V^*$ is stable. Since $G$ is semisimple, $\cO(V\oplus V^*)$ contains no nontrivial one-dimensional $G$-invariant subspaces. Since $\cO(V\oplus V^*)^G$ is a polynomial algebra, the claim follows from Theorem \ref{jetthm}. \end{proof}

At the vertex algebra level, we obtain maps \begin{equation} \label{embedleft} \hat{\tau}: V_{-n} (\gs\gl_n) \ra \cS,\ \ \ \ \ \ \hat{\tau}' :V_{-n}(\gs\gl_n)\ra \cS \end{equation} corresponding to the left and right actions of $SL_n$ on $V$. In order to study $\text{Com}(\Theta,\cS) = \cS^{\gg[t]}$ for $\gg = \gs\gl_n \oplus \gs\gl_n$, we first consider the structure of $\text{gr}(\cS)^{\gg[t]}$. By Lemma \ref{eckapp}, $\cO((V\oplus V^*)_{\infty})^{\gg[t]}$ is generated as a differential algebra by $\cO(V\oplus V^*)^G$, and since $\cO(V\oplus V^*)^G$ is the polynomial algebra with generators $\tilde{d},\tilde{d}',\tilde{c}_1,\dots, \tilde{c}_{n-1}$, it follows that $\cO((V\oplus V^*)_{\infty})^{\gg[t]}$ is the polynomial algebra with generators $\partial^k \tilde{d}, \partial^k \tilde{d}', \partial^k \tilde{c}_1,\dots, \partial^k \tilde{c}_{n-1}$, for all $k\geq 0$. Under the isomorphism $\cO((V\oplus V^*)_{\infty})^{\gg[t]} \cong \text{gr}(\cS)^{\gg[t]}$, the generators $\tilde{d},\tilde{d}',\tilde{c}_1,\dots, \tilde{c}_{n-1}$ correspond to generators for  $\text{gr}(\cS)^{\gg[t]}$ as a differential algebra, which we also denote by $\tilde{d},\tilde{d}',\tilde{c}_1,\dots, \tilde{c}_{n-1}$.

\begin{lemma} In the case where $V$ is the space of $n\times n$ matrices and $G = SL_n\times SL_n$, the map \eqref{injgamm} is surjective, and is therefore an isomorphism.
\end{lemma}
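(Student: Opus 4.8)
The goal is to prove that the injection
\begin{equation}\nonumber
\text{gr}(\cS^{\gg[t]})\hookrightarrow \text{gr}(\cS)^{\gg[t]}
\end{equation}
is surjective, where $V$ is the space of $n\times n$ matrices and $\gg = \gs\gl_n\oplus\gs\gl_n$. By the preceding lemma, the target $\text{gr}(\cS)^{\gg[t]}$ is a polynomial differential algebra freely generated as a $\partial$-ring by $\tilde{d},\tilde{d}',\tilde{c}_1,\dots,\tilde{c}_{n-1}$. Since the map is a morphism of $\partial$-rings and its image $R$ is a $\partial$-subring, surjectivity will follow once I exhibit, for each generator in $\{\tilde{d},\tilde{d}',\tilde{c}_1,\dots,\tilde{c}_{n-1}\}$, an actual element of the image that maps to it under $\phi_d$. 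This is exactly the "reconstruction" strategy.

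**The plan.**

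First I would identify the genuine vertex operators in $\cS^{\gg[t]} = \text{Com}(\Theta,\cS)$ that lift the symbols $\tilde{c}_i$. Recall $\tilde{c}_i = \text{tr}(X^i)$ where $X_{ij}=\tau(\xi_{ij})$; the natural vertex-algebra lift is $c_i(z)=\text{tr}(\Xi^i)$, built from the currents $\theta^{\xi_{ij}}$ of $\Theta$ via iterated Wick products. Because these are constructed from the generating currents of $V_{-n}(\gs\gl_n)$ themselves, they manifestly lie in the commutant, and their leading symbols under $\phi$ recover $\tilde{c}_i$ up to lower-degree corrections. The determinants $\tilde{d}=\det[\partial/\partial x'_{ij}]$ and $\tilde{d}'=\det[x'_{ij}]$ require lifts $d(z)$ and $d'(z)$: I would take the obvious normally ordered determinants $d = :\det[\beta^{ij}]:$ and $d' = :\det[\gamma^{ij}]:$ (with a fixed ordering convention), which have leading symbols $\tilde{d},\tilde{d}'$ respectively. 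The main verification is then that these normally ordered determinants actually commute with all the currents $\theta^{\xi}$, i.e.\ lie in $\cS^{\gg[t]}$.

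**The main obstacle.**

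The crux is showing $d(z),d'(z)\in\text{Com}(\Theta,\cS)$, that is, that the normally ordered determinant of the $\beta$'s (and of the $\gamma$'s) is annihilated by all nonnegative circle products with $\theta^{\xi}$ for $\xi\in\gs\gl_n\oplus\gs\gl_n$. The classical determinant $\det[x'_{ij}]$ is a relative $SL_n\times SL_n$-invariant, indeed an absolute invariant since the determinant of the transformation is $1$; the difficulty is that normal ordering introduces quantum corrections from the double contractions between the currents and the constituent fields. I expect the heart of the argument to be a direct OPE computation verifying that these corrections cancel --- this cancellation is precisely where the critical level $k=-n$ enters, since the coefficient governing the anomalous terms is proportional to $(k+n)$ or to the dual Coxeter number shift, and vanishes at criticality. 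Concretely, I would compute $\theta^{\xi}(z)\circ_0 d(w)$ and show the result is zero; the single contractions reproduce the infinitesimal $\gs\gl_n$-action on $\det$, which vanishes because $\det$ is $\gs\gl_n$-invariant (the trace-free condition kills the scaling anomaly), and the residual double-contraction terms cancel because of the specific level.

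**Conclusion.**

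Once $c_1,\dots,c_{n-1},d,d'$ are verified to lie in $\cS^{\gg[t]}$ with the correct leading symbols, they generate a $\partial$-subring of $\cS^{\gg[t]}$ whose image under the injection \eqref{injgamm} contains all the generators $\tilde{d},\tilde{d}',\tilde{c}_1,\dots,\tilde{c}_{n-1}$ of $\text{gr}(\cS)^{\gg[t]}$. Since the map is a $\partial$-ring homomorphism, its image is all of $\text{gr}(\cS)^{\gg[t]}$, so \eqref{injgamm} is surjective, hence an isomorphism. As a byproduct, Lemma \ref{reconlem} then yields that $\cS^{\gg[t]}$ is strongly generated by the vertex operators $c_1,\dots,c_{n-1},d,d'$.
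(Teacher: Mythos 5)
Your overall strategy coincides with the paper's: lift each generator $\tilde{d},\tilde{d}',\tilde{c}_1,\dots,\tilde{c}_{n-1}$ of $\text{gr}(\cS)^{\gg[t]}$ to a vertex operator in $\cS^{\gg[t]}$ with the correct leading symbol, then invoke the reconstruction lemma. However, there is a genuine gap in your treatment of the lifts of $\tilde{c}_i$ for $i\geq 2$. You take $c_i(z)$ to be a normally ordered trace of powers of the current matrix $\Xi$ built from the $\theta^{\xi_{ij}}$ of one copy of $V_{-n}(\gs\gl_n)$, and assert that such elements ``manifestly lie in the commutant'' because they are constructed from the currents themselves. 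This does not follow: a normally ordered polynomial in the currents of $\hat\tau(V_{-n}(\gs\gl_n))$ automatically commutes with the \emph{other} copy $\hat\tau'(V_{-n}(\gs\gl_n))$, but to commute with the copy it is built from it must be a \emph{central} element of $V_{-n}(\gs\gl_n)$, and the naive normally ordered traces $\text{tr}(\Xi^i)$ are not central without quantum corrections. The paper closes this point by invoking the fact that at the critical level $k=-n$ the center of $V_{-n}(\gs\gl_n)$ is large and contains generators $\tilde C_2,\dots,\tilde C_n$ whose symbols are the classical Casimirs; one then sets $C_i=\hat\tau(\tilde C_i)$. Your argument needs this input (or an explicit verification that your corrected traces are central), and it is precisely here --- not where you place it --- that criticality is essential.

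Relatedly, you misattribute the role of the critical level to the determinants $D$ and $D'$, anticipating a cancellation of double-contraction anomalies proportional to $k+n$. In fact no such cancellation is needed: $D'$ is a normally ordered determinant in the $\gamma^{x'_{ij}}$ alone, and since each $\theta^{\xi}=-\sum_i:\gamma^{x'_i}\beta^{\rho(\xi)(x_i)}:$ contains exactly one $\beta$, only single contractions with $D'$ can occur; these reproduce the classical infinitesimal action, which vanishes because $\det$ is an $\gs\gl_n\oplus\gs\gl_n$-invariant. The same applies to $D$, built from the $\beta$'s alone. So your proposed OPE computation for $D,D'$ would succeed, but for a more elementary reason than the one you give, and it holds at any level. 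The remaining ingredients of your argument (the choice of $C_1$, the surjectivity conclusion via the $\partial$-ring structure, and the appeal to the reconstruction lemma) match the paper and are fine.
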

\begin{proof} We need to find vertex operators $D, D'$ of weight $n/2$ and $C_1,\dots, C_{n-1}$ of weights $1,\dots, n-1$ in $\cS^{\gg[t]}$, such that \begin{enumerate} 
\item $D,D'$ lie in $\cS^{\gg[t]}_{(n)}$, and $\phi_{n} (D) = \tilde{d}$, $\phi_n (D') = \tilde{d}'$.
\item For $i=1,\dots,n-1$, $C_i \in \cS^{\gg[t]}_{(2i)}$, and $\phi_{2i}(C_i) = \tilde{c}_i$.
\end{enumerate} In this notation, $\phi_r: \cS^{\gg[t]}_{(r)} \rightarrow \cS^{\gg[t]}_{(r)} / \cS^{\gg[t]}_{(r-1)}\subset \text{gr}(\cS^{\gg[t]})$ is the projection (\ref{projmap}). Recall that $d'\in \cD^G$ is the $n\times n$ determinant of the matrix whose entries are the linear functions $x'_{ij}$ on $V$. The corresponding element of $\text{gr}(\cS)^{\gg[t]}$ is the $n\times n$ determinant of the matrix whose entries are $\gamma^{x'_{ij}}_0$. Letting $D'$ be the vertex operator in $\cS$ obtained from $d'$ by replacing $\gamma^{x'_{ij}}_0$ with the vertex operator $\gamma^{x'_{ij}}$, and replacing all products with iterated Wick products, it is immediate that $D'$ is $G$-invariant. Moreover, $D'$ is $\gg[t]$-invariant because it only depends on the $\gamma^{x'_{ij}}$, and therefore can have no double contractions with the fields $\theta^{\xi}$ for $\xi\in\gg$. Therefore, $D'$ lies in $\cS^{\gg[t]}$, and the fact that $\phi_n(D') =\tilde{d}'$ is clear by construction. The vertex operator $D$ is defined in the same way with $\beta^{x_{ij}}$ playing the role of $\gamma^{x'_{ij}}$, and the same argument shows that $D\in\cS^{\gg[t]}$ and $\phi_n(D) = d$. We define $$C_1= \sum_{i,j=1}^n :\beta^{x_{ij}} \gamma^{x'_{ij}}:,$$ which is easily seen to have the desired properties.
Finally, since $V_{-n}(\gs\gl_n)$ has critical level, the center of $V_{-n}(\gs\gl_n)$ has generators $\tilde{C}_2,\dots,\tilde{C}_n$ corresponding to the center of $U(\gs\gl_n)$. We define $C_i = \hat{\tau}(\tilde{C}_i)$, which clearly lies in $\cS^{\gg[t]}$ and satisfies $\phi_{2i}(C_i) = \tilde{c}_i$. The existence of these elements of $\cS^{\gg[t]}$ implies that \eqref{injgamm} is surjective, as claimed.  \end{proof}

It follows from Lemma \ref{reconlem} that $\{D,D',C_1,\dots, C_{n-1}\}$ is a strong generating set for $\cS^{\gg[t]}$. Since $C_2,\dots,C_{n-1}$ are normally ordered polynomials in the fields $\theta^{\xi}$ for $\xi\in \gg$, they lie in the center of $\cS^{\gg[t]}$. This is analogous to the fact that $c_2,\dots,c_{n-1}$ lie in the center of $\cD^G$. Moreover, since $\text{gr}(\cS)^{\gg[t]}$ is a polynomial algebra with generators $\tilde{d},\tilde{d}',\tilde{c}_1,\dots,\tilde{c}_{n-1}$ and their derivatives, there are no normally ordered polynomial relations among the vertex operators $D,D',C_1,\dots, C_{n-1}$ and their derivatives. Thus, we have proved the following theorem.

\begin{thm} For all $n\geq 2$, $\cS^{\gg[t]}$ is freely generated as a vertex algebra by $\{D,D',C_1,\dots, C_{n-1}\}$.
\end{thm}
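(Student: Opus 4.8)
The plan is to obtain the theorem from the reconstruction principle of Lemma \ref{reconlem} together with the fact that the associated graded algebra is a polynomial differential ring. Two things must be checked: that $\{D,D',C_1,\dots,C_{n-1}\}$ strongly generates $\cS^{\gg[t]}$, and that it does so freely. For strong generation, recall from the preceding lemma that the comparison map \eqref{injgamm} is an isomorphism, so that $\text{gr}(\cS^{\gg[t]})\cong\text{gr}(\cS)^{\gg[t]}\cong\cO((V\oplus V^*)_{\infty})^{G_{\infty}}$. By Lemma \ref{eckapp} (an application of Theorem \ref{jetthm}) this equals $\cO(((V\oplus V^*)/\!\!/G)_{\infty})$, and since $\cO(V\oplus V^*)^G$ is a polynomial algebra on $\tilde d,\tilde d',\tilde c_1,\dots,\tilde c_{n-1}$ once $\tilde c_n$ has been eliminated through the Newton--Girard relation \eqref{gradedrel}, this arc-space invariant ring is the polynomial $\partial$-ring generated by $\tilde d,\tilde d',\tilde c_1,\dots,\tilde c_{n-1}$. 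As the vertex operators were built so that $\phi_n(D)=\tilde d$, $\phi_n(D')=\tilde d'$, and $\phi_{2i}(C_i)=\tilde c_i$, the hypotheses of Lemma \ref{reconlem} hold and the set strongly generates $\cS^{\gg[t]}$.

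For freeness I would run the standard leading-symbol argument on the good increasing filtration \eqref{goodi}--\eqref{goodii}. Let $R$ be a normally ordered polynomial in $D,D',C_1,\dots,C_{n-1}$ and their derivatives which vanishes in $\cS^{\gg[t]}$; I claim every coefficient of $R$ is zero. Let $r$ be the largest filtration degree attained by a normally ordered monomial of $R$ with nonzero coefficient, and let $R_r$ collect those top monomials. The leading symbol of a degree-$r$ monomial such as $:\partial^{k_1}D\cdots:$ is precisely the corresponding commutative monomial in $\tilde d,\tilde d',\tilde c_i$ and their $\partial$-derivatives, because all reorderings and contractions required to identify the two produce only terms of strictly lower filtration degree by \eqref{goodi}--\eqref{goodii}. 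Applying $\phi_r$ from \eqref{projmap}, the identity $R=0$ gives $\phi_r(R)=\phi_r(R_r)=0$, i.e. a linear relation among distinct monomials in the symbols inside $\text{gr}(\cS^{\gg[t]})$. Since that ring is a polynomial algebra, these monomials are linearly independent, so the top-degree coefficients all vanish, contradicting the choice of $r$. Hence $R$ is trivial and the generators freely generate $\cS^{\gg[t]}$.

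The point requiring the most care is this leading-symbol bookkeeping in the freeness step, since the Wick product is neither commutative nor associative and one must be sure that every rearrangement of a normally ordered polynomial in $D,D',C_i$ costs only lower-filtration terms; this is exactly what \eqref{goodi}--\eqref{goodii} guarantee, so no essential difficulty remains here. The genuinely substantive inputs --- that $\cO(V\oplus V^*)^G$ is polynomial and that the arc-space invariants are freely generated by the $\partial$-derivatives of the classical generators (Lemma \ref{eckapp}), and that the classical-to-chiral comparison \eqref{injgamm} is an isomorphism (the preceding lemma) --- have already been established, so that the theorem is in the end a formal consequence of these results.
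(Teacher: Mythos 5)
Your proposal is correct and follows essentially the same route as the paper: strong generation via Lemma \ref{reconlem} applied to the isomorphism \eqref{injgamm} together with Lemma \ref{eckapp}, and freeness from the fact that $\text{gr}(\cS)^{\gg[t]}$ is a polynomial algebra on $\tilde d,\tilde d',\tilde c_1,\dots,\tilde c_{n-1}$ and their derivatives. The only difference is that you spell out the leading-symbol argument for freeness, which the paper leaves implicit; that elaboration is accurate.
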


We write down all nontrivial OPE relations in $\cS^{\gg[t]}$ in the cases $n=2,3,4$ for the sake of concreteness.
For general $n$, we are able to compute the leading terms.
\begin{enumerate}
\item In the case $n=2$, the generators are $C_1, D, D'$, which satisfy the OPE relations
\begin{equation}\nonumber
\begin{split}
C_1(z) C_1(w) &\sim -4 (z-w)^{-2}\,,\\
C_1(z) D(w) &\sim - 2 D(w) (z-w)^{-1}\,,\\
C_1(z) D'(w) &\sim 2 D'(w) (z-w)^{-1}\,,\\
D(z) D'(w) &\sim 2 (z-w)^{-2} + C_1(w)(z-w)^{-1}\,.
\end{split}
\end{equation}
Thus, $\cS^{\gg[t]}$ is precisely the critical level affine vertex algebra $V_{-2}(\gs\gl_2)$.

\item In the case $n=3$, the generators are $C_1, C_2, D, D'$, with nontrivial OPE relations
\begin{equation}\nonumber
\begin{split}
C_1(z) C_1(w) &\sim -9 (z-w)^2\,,\\
C_1(z) D(w) &\sim -3 D(w) (z-w)^{-1}\,,\\
C_1(z) D'(w) &\sim 3 D'(w) (z-w)^{-1}\,,\\
D(z) D'(w) &\sim 6 (z-w)^{-3} + 2C_1(w) (z-w)^{-2} +\\
&\quad +\bigg(\frac{1}{3} :C_1(w)C_1(w): -\frac{1}{2} C_2(w) +
\partial C_1(w)\bigg) (z-w)^{-1}\,.
\end{split}
\end{equation}
We see that these are precisely the relations of the critical level $\cW_3^{(2)}$-algebra \eqref{W3}.
\item In the case $n=4$, the generators are $C_1, C_2,C_3, D, D'$, with nontrivial OPE relations
\begin{equation}\nonumber
\begin{split}
C_1(z) C_1(w) &\sim -16 (z-w)^{-1}\,,\\
C_1(z) D(w) &\sim -4 D(w) (z-w)^{-1}\,,\\
C_1(z) D'(w) &\sim 4 D'(w) (z-w)^{-1}\,,\\
D(z) D'(w) &\sim 24 (z-w)^{-4} + 6 C_1(w) (z-w)^{-3}  + \\
&\quad +\bigg(-C_2(w) + \frac{3}{4} :C_1(w) C_1(w): +
3 \partial C_1(w) \bigg) (z-w)^{-2}+\\
&\quad + \bigg(  -\frac{1}{8} C_3(w) -\frac{1}{4} :C_2(w) C_1(w): + \frac{1}{16} :C_1(w) C_1(w) C_1(w): +\\
&\quad +\frac{3}{4} :C_1(w)
\partial C_1(w): + \partial^2 C_1(w) \bigg)(z-w)^{-1}\, .
\end{split}
\end{equation}
We see that these are precisely the relations of the critical level $\cW_4^{(2)}$-algebra \eqref{W4}.
\item For general $n$, the generators are $C_1, \dots ,C_{n-1}, D, D'$, with nontrivial OPE relations
\begin{equation}\nonumber
\begin{split}
C_1(z) C_1(w) &\sim -n^2 (z-w)^{-1}\,,\\
C_1(z) D(w) &\sim -n D(w) (z-w)^{-1}\,,\\
C_1(z) D'(w) &\sim n D'(w) (z-w)^{-1}\,,\\
D(z) D'(w) &\sim n! (z-w)^{-n} + (n-1)! C_1(w) (z-w)^{-(n-1)}  + ... \, .
\end{split}
\end{equation}
We remark that also in \cite{FS}, only the leading OPEs of the $\cW_n^{(2)}$-algebra for $n>4$ were computed.
At critical level they agree with our computations. This leads us to the following conjecture.
\end{enumerate}
\begin{conj}
The commutant $ \cC = \text{Com}(\cA,\cW)$ is isomorphic to the $\cW_n^{(2)}$-algebra at critical level.
\end{conj}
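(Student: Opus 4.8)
The plan is to produce an explicit vertex algebra isomorphism sending $C_1 \mapsto nH$, $D \mapsto X^-$, $D' \mapsto X^+$, and $C_2,\dots,C_{n-1}$ to the central generators $S_2,\dots,S_{n-1}$ (possibly after adjusting by lower-weight normally ordered corrections), and then to verify that this assignment respects all operator product expansions. Since both $\cC$ and the critical-level $\cW_n^{(2)}$-algebra are freely generated by fields of weights $1,2,\dots,n-1$ together with two fields of weight $n/2$, such an assignment extends to a vertex algebra homomorphism as soon as the generating OPEs match, and freeness on both sides then forces it to be an isomorphism. The difficulty is that, as noted after \eqref{W4}, the full OPE algebra of $\cW_n^{(2)}$ at critical level is known only for $n\leq 4$; for $n\geq 5$ there is no presentation against which to compare directly.

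I would therefore treat the two sides by different means. On the commutant side the situation is entirely explicit: $D$ and $D'$ are the Wick-ordered determinants of the matrices $[\beta^{ab}]$ and $[\gamma^{ab}]$, the fields $C_2,\dots,C_{n-1}$ are central, the self-OPEs of $D$ and of $D'$ vanish, and $C_1(z)C_1(w)$, $C_1(z)D(w)$, $C_1(z)D'(w)$ are already recorded in closed form for all $n$. The only remaining generating OPE is $D(z)D'(w)$, which I would compute in closed form by applying Wick's theorem to the two determinant fields, using the single contraction $\beta^{ab}(z)\gamma^{cd}(w)\sim \delta^{ac}\delta^{bd}(z-w)^{-1}$. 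A configuration contracting $p$ of the $n$ factors contributes a pole of order $p$ whose coefficient is built from the complementary $(n-p)\times(n-p)$ minors of the two determinants, with derivative terms of the remaining fields producing the lower-order poles; summing over configurations should express these coefficients as normally ordered polynomials in $C_1,\dots,C_{n-1}$ and their derivatives, with the combinatorics governed by the same Newton--Girard relations used in \eqref{gradedrel}. This would yield a uniform formula for $D(z)D'(w)$ valid for all $n$, of which \eqref{W3} and \eqref{W4} are the cases $n=3,4$, and would thereby determine the full OPE algebra of $\cC$.

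To close the comparison for $n\geq 5$ I would then determine the $X^+(z)X^-(w)$ OPE of the critical-level $\cW_n^{(2)}$-algebra independently and match it to this formula. The most promising route is the Feigin--Semikhatov coset realization inside $\text{Com}(V_{k'}(\gs\gl_n)\otimes V(\gg\gl_1),V_{k'}(\gs\gl(n|1))\otimes V_L)$: at $k=-n$ the level relation $(k'+n-1)(k+n-1)=1$ forces $k'=-n$ as well, so one is led back to two critical copies of $\gs\gl_n$, and after bosonizing $V_{-n}(\gs\gl(n|1))$ and the lattice factor in terms of a $\beta\gamma$ system one may hope to identify the resulting coset with $\cS^{\gg[t]}=\cC$ directly. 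The main obstacle is precisely this critical specialization: the screening operators and the lattice vertex algebra $V_L$ in the Feigin--Semikhatov construction are tuned to generic level, and the limit $k\to -n$ is singular, so one must show that a well-defined critical-level algebra survives the limit and that its two weight-$n/2$ generators contract according to the closed-form formula obtained on the commutant side.

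An alternative, purely internal route would be to prove that a freely generated vertex algebra carrying a Heisenberg field $H$, a charged pair $X^\pm$ of weight $n/2$ with the prescribed leading singularities, and central generators $S_2,\dots,S_{n-1}$, is rigid, in the sense that the Jacobi identity leaves no free structure constants and hence the full OPE is determined by the data we already control. If such a uniqueness statement could be established, it would simultaneously pin down both algebras and force them to agree. However, I expect establishing this rigidity for all $n$ to be essentially as hard as the original conjecture, so the critical-level specialization of the Feigin--Semikhatov coset remains, in my view, the decisive step.
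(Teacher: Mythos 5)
You should first note that the statement you are proving is stated in the paper as a \emph{conjecture}: the paper itself does not prove it for general $n$. Its entire support consists of the explicit OPE computations among $D,D',C_1,\dots,C_{n-1}$ for $n=2,3,4$, which are matched term by term against $V_{-2}(\gs\gl_2)$ and the Feigin--Semikhatov presentations \eqref{W3} and \eqref{W4}, together with the agreement of the leading poles $n!(z-w)^{-n}+(n-1)!\,C_1(w)(z-w)^{-(n-1)}+\cdots$ for general $n$. Your proposal correctly reproduces this evidence, correctly identifies the bottleneck (for $n\geq 5$ there is no known presentation of $\cW_n^{(2)}$ at critical level to compare against), and is sound in its easier half: since $\{D,D',C_1,\dots,C_{n-1}\}$ strongly generates $\cC$, every nonnegative product $D\circ_k D'$ is automatically a normally ordered polynomial in these fields and their derivatives, so a uniform Wick-theorem evaluation of $D(z)D'(w)$ governed by complementary minors and the Newton--Girard relations of \eqref{gradedrel} is a realistic, if unexecuted, computation.

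The genuine gap is the other half, and you have named it yourself without closing it: producing the object on the right-hand side of the conjectured isomorphism for $n\geq 5$. The Feigin--Semikhatov coset $\text{Com}(V_{k'}(\gs\gl_n)\otimes V(\gg\gl_1),V_{k'}(\gs\gl(n|1))\otimes V_L)$ indeed forces $k'=-n$ when $k=-n$, but this means the \emph{denominator} algebra is itself at critical level, so it has no Sugawara vector, does not act semisimply, and none of the standard coset-decomposition techniques apply; you assert that "a well-defined critical-level algebra survives the limit" as something to be shown, and nothing in the proposal shows it. (Minor point: the coset contains one copy of $V_{k'}(\gs\gl_n)$, not two; the two critical copies of $V_{-n}(\gs\gl_n)$ live on the commutant side of the comparison.) The rigidity alternative is, as you concede, not established and plausibly equivalent to the conjecture. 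So the proposal is a reasonable research program whose status matches the paper's: the statement remains a conjecture for $n\geq 5$, proved only for $n\leq 4$ by the direct OPE comparison that both you and the paper carry out.
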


\begin{remark} The center $Z(\cC)$ is freely generated by $\{C_2,\dots,C_n\}$, where $C_n = \hat{\tau}(\tilde{C}_n)$ can be expressed as a normally ordered polynomial in $\{D,D',C_1,\dots, C_{n-1}\}$, and their derivatives. In particular, $Z(\cC)$ is isomorphic to the center $Z(V_{-n}(\gs\gl_n))$. Assuming that $\cW^{(2)}_n$ can be realized as a quantum reduction of $V_k(\gs\gl_n)$, this is consistent with Arakawa's result \cite{A} that, at critical level $k = -h^{\vee}$, $Z(V_k(\gg))$ is isomorphic to the center $Z(\cW_k(\gg,f))$ for any simple $\gg$ and any nilpotent element $f \in\gg$.
\end{remark}

\section{The representation theory of $\cC$}
Given a representation $V$ of a connected, reductive group $G$, recall that we have a commutative diagram $$\begin{array}[c]{ccc}\cS^{\gg[t]} &\stackrel{}{\hookrightarrow}& \cS  \\ \downarrow\scriptstyle{\pi}&&\downarrow\scriptstyle{\pi_{\text{Zhu}}}\\ \cD^{G} &\stackrel{}{\hookrightarrow}& \cD \end{array},$$ where $\pi$ is the restriction of $\pi_{\text{Zhu}}: \cS(V)\rightarrow \cD(V)$. 

\begin{thm} \label{zhudescription} Suppose that $\cO((V\oplus V^*)_{\infty})^{G_{\infty}} \cong\cO(((V\oplus V^*) /\!\!/ G)_{\infty})$ and that the map (\ref{injgamm}) is surjective. Then the Zhu algebra $A(\cS^{\gg[t]})$ is isomorphic to $\cD^G$ and the map $\pi$ above is surjective. \end{thm}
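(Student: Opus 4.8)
The plan is to produce the isomorphism $A(\cS^{\gg[t]})\cong\cD^G$ functorially and to read off surjectivity of $\pi$ from it. Applying the Zhu functor to the inclusion $\iota:\cS^{\gg[t]}\hookrightarrow\cS$ and using $A(\cS)=\cD$ gives an algebra homomorphism $\bar\pi:=A(\iota):A(\cS^{\gg[t]})\to\cD$. By the commutative diagram (\ref{cdgencase}) with $\cB=\Theta$, its image lies in $\text{Com}(\pi_{\text{Zhu}}(\Theta),\cD)=\text{Com}(\tau(U(\gg)),\cD)=\cD^G$, so in fact $\bar\pi:A(\cS^{\gg[t]})\to\cD^G$. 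Functoriality also gives $\pi=\bar\pi\circ\pi_{\text{Zhu}}$, and since the Zhu projection $\pi_{\text{Zhu}}:\cS^{\gg[t]}\to A(\cS^{\gg[t]})$ is surjective we get $\text{im}(\pi)=\text{im}(\bar\pi)$. Thus both claims of the theorem follow once $\bar\pi$ is shown to be an isomorphism.

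For surjectivity I first extract good strong generators. Since $\cO((V\oplus V^*)_\infty)^{G_\infty}\cong\cO(((V\oplus V^*)/\!\!/G)_\infty)$ and (\ref{injgamm}) is surjective, $\text{gr}(\cS^{\gg[t]})\cong\cO(((V\oplus V^*)/\!\!/G)_\infty)$ is generated as a $\partial$-ring by finitely many invariants $f_1,\dots,f_k$ that span a generating set of $\cO(V\oplus V^*)^G$. By Lemma \ref{reconlem}, any lifts $F_i\in\cS^{\gg[t]}_{(d_i)}$ with $\phi_{d_i}(F_i)=f_i$ strongly generate $\cS^{\gg[t]}$, so the classes $[F_i]$ generate $A(\cS^{\gg[t]})$ and the elements $\bar\pi([F_i])=\pi(F_i)$ generate $\text{im}(\bar\pi)$. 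Because the Zhu map $\cS\to\cD$ carries the degree filtration into the Bernstein filtration (\ref{bernstein}) and each $f_i$ is a level-zero invariant (built from the $\beta^{\cdot}_0,\gamma^{\cdot}_0$ alone), the associated graded of $\pi(F_i)$ is again $f_i$, now viewed in $\text{gr}(\cD^G)\cong\cO(V\oplus V^*)^G$. As the $f_i$ generate $\text{gr}(\cD^G)$, the standard filtered-algebra argument shows that the $\pi(F_i)$ generate $\cD^G$; hence $\bar\pi$ is surjective.

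Injectivity is the crux, and it amounts to bounding $A(\cS^{\gg[t]})$ from above. Give $A(\cS^{\gg[t]})$ the filtration pushed forward from the good filtration along $\pi_{\text{Zhu}}$, so that $\bar\pi$ is filtered with target $\text{gr}(\cD^G)=\cO(V\oplus V^*)^G$. The relation $\pi_{\text{Zhu}}(\partial a)=-\,(\text{wt}\,a)\,\pi_{\text{Zhu}}(a)$ shows that derivatives of the $F_i$ contribute nothing new to $\text{gr}(A(\cS^{\gg[t]}))$, so this associated graded is governed by the level-zero reduction of $\text{gr}(\cS^{\gg[t]})$ — equivalently by Zhu's commutative $C_2$-algebra $\cS^{\gg[t]}/\text{span}\{a\circ_{-2}b\}$. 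Under our hypotheses this reduction is exactly $\cO((V\oplus V^*)/\!\!/G)=\cO(V\oplus V^*)^G=\text{gr}(\cD^G)$, and the standard surjection of the $C_2$-algebra onto $\text{gr}(A(\cS^{\gg[t]}))$ presents $\text{gr}(A(\cS^{\gg[t]}))$ as a quotient of $\cO(V\oplus V^*)^G$. Composing this surjection with $\text{gr}(\bar\pi)$ yields a self-map of $\cO(V\oplus V^*)^G$ sending each $f_i$ to the leading symbol of $\pi(F_i)$, namely $f_i$; being the identity on generators it is the identity. Hence both maps are isomorphisms, $\text{gr}(\bar\pi)$ is an isomorphism, and since the filtrations are exhaustive, $\bar\pi$ is an isomorphism as well.

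The one genuinely non-formal point is the upper bound in the injectivity step: a priori the relations among the $[F_i]$ in the Zhu algebra could be weaker than those cutting out $\cD^G$, making $A(\cS^{\gg[t]})$ too large. This is precisely where both hypotheses are indispensable — they force $\text{gr}(\cS^{\gg[t]})$ to be the honest arc space of the categorical quotient, so that its $C_2$-reduction produces no spurious invariants and no missing relations beyond those already present in $\cO(V\oplus V^*)^G$.
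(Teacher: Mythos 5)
Your argument is correct and proves both claims; at its core it follows the same strategy as the paper (choose strong generators $F_i$ whose symbols $f_i$ generate $\cO(V\oplus V^*)^G$, deduce surjectivity by a filtered induction, and control the kernel by comparing associated graded algebras), but the injectivity step is packaged differently. The paper takes a kernel element of the induced map $h$ (your $\bar\pi$), observes that its image is a relation in $\cD^G$ whose leading term lifts to a normally ordered identity in $\cS^{\gg[t]}$, and peels off leading terms by downward induction on degree. You instead sandwich $\text{gr}(A(\cS^{\gg[t]}))$ between two copies of $\cO(V\oplus V^*)^G$, using the surjection from Zhu's $C_2$-algebra on one side and $\text{gr}(\bar\pi)$ on the other, and note that the composite is the identity on generators. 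This is a cleaner, more structural formulation, and it isolates exactly where the hypotheses enter (they force $R_{\cS^{\gg[t]}}\cong\cO(V\oplus V^*)^G$). Two points deserve care, though neither is fatal. First, the standard surjection $R_{\cV}\twoheadrightarrow\text{gr}(A(\cV))$ is proved for the filtration of $A(\cV)$ by conformal weight, not for the pushforward of the good increasing filtration you use; in this example the two are compatible, but you should either run the argument with the weight filtration or justify the transfer. Second, the well-definedness of your map $\cO(V\oplus V^*)^G\to\text{gr}(A(\cS^{\gg[t]}))$ --- that every polynomial relation among the $f_i$ is satisfied by the symbols of the $[F_i]$ --- is exactly the content the paper supplies with its explicit lifting of relations back to $\cS^{\gg[t]}$; calling it standard hides the one place where real work happens, namely that a vanishing leading symbol in $\text{gr}(\cS^{\gg[t]})$ forces the corresponding Zhu-algebra element into strictly lower filtration degree.
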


\begin{proof} Fix a generating set $\{f_1,\dots, f_r\}$ for $\cD^G$ such that $f_i \in \cD^G_{(d_i)} \setminus \cD^G_{(d_i-1)}$, which under the above hypotheses corresponds to a strong generating set $\{f_1(z),\dots, f_r(z)\}$ for $\cS^{\gg[t]}$ as a vertex algebra. Without loss of generality, we may assume that $f_i(z) \in \cS^{\gg[t]}_{(d_i)} \setminus  \cS^{\gg[t]}_{(d_i-1)}$. Then $A(\cS^{\gg[t]})$ is generated by $\{\tilde{f}_1,\dots, \tilde{f}_r\}$ where $\tilde{f}_i = \pi_{\text{Zhu}}(f_i(z))$. 

Clearly, $\pi(f_i(z)) = f_i$ up to corrections of lower degree in the Bernstein filtration, and so, by induction on degree, we see that $\pi$ is surjective. The inclusion $\cS^{\gg[t]} \hookrightarrow \cS$ induces a map of Zhu algebras $h:A(\cS^{\gg[t]}) \rightarrow A(\cS) = \cD$ whose image is clearly $\cD^G$ since $h(\tilde{f}_i) =f_i$ up to lower order corrections. We claim that $h$ has trivial kernel, and is therefore an isomorphism from $A(\cS^{\gg[t]})$ to $\cD^G$. To see this, let $r\in \text{Ker}(h)$ be an element of degree $d$, regarded as a polynomial among the $\tilde{f}_i$'s. Under $h$, it maps to a relation in $\cD^G$ whose leading term is the same, with $\tilde{f}_i$ replaced by $f_i$. There is an analogous relation $r(z)\in \cS^{\gg[t]}$, which is obtained up to lower order correction by replacing each $f_i$ with $f_i(z)$ and replacing ordinary products with Wick products. Since $r(z)$ is identically zero, and $\pi_{\text{Zhu}}(r(z)) \in A(\cS^{\gg[t]}$ has the same leading term as $r$, the claim follows by induction on degree. \end{proof}

It follows that in the case where $G = SL_n \times SL_n$ and $V$ is the space of $n\times n$ matrices, $A(\cS^{\gg[t]}) = \cD^G$. Thus, the irreducible, admissible modules over $\cC = \text{Com}(\cA,\cW)$ are precisely the irreducible $\cD^G$-modules. The structure and representation theory of rings of invariant differential operators is an important classical problem. In general, it is believed that these algebras have many features in common with universal enveloping algebras. They have been well studied in the case where $G$ is abelian \cite{MV}, but much less is known for nonabelian groups. The first step in this direction was taken by Schwarz \cite{Sch} in the case where $G = SL_3$ and $V$ is the adjoint module.

In our case, where $V$ is the space of $n\times n$ matrices and $G = SL_n\times SL_n$, $\cD^G$ is closely related to $U(\gs\gl_2)$. It will be convenient to work with the generating set $d, d', c_1,\dots, c_n$ for $\cD^G$, which satisfy relations \begin{equation} \label{reldg} dd' + P(c_1,\dots, c_n)=0,\ \ \ \ \ d'd + Q(c_1,\dots, c_n)=0.\end{equation} Here $P$ and $Q$ are inhomogeneous polynomials whose leading terms coincide with the polynomial $p$ given by (\ref{gradedrel}). Modulo an explicit formula for $P$ and $Q$, we will classify the irreducible, finite dimensional modules over $\cD^G$. These modules are in one-to-one correspondence with the irreducible, admissible $\cC$-modules $M = \bigoplus_{k\geq 0} M_k$ for which each $M_k$ is finite dimensional. In the cases $n=2,3,4$, we will write down $P$ and $Q$, giving an explicit classification of such modules in these cases.

First, we define a notion of Verma module for $\cD^G$. Recall that the center $Z(\cD^G)\subset \cD^G$ is generated by $c_2,\dots, c_n$. Let $A\subset \cD^G$ be the subalgebra generated by $d', c_1,\dots, c_n$. Fix $a\in \mathbb{C}$ and a central character $\lambda$ given by $\lambda(c_i) = \lambda_i$ for $i=2,\dots, n$, and let $C_{a,\lambda}$ be the one-dimensional $A$-module with basis $v_{a,\lambda}$ on which $d'$ acts by zero, $c_1$ acts by $a\cdot \text{id}$, and $c_i$ acts by $\lambda_i\cdot \text{id}$, for $i=2,\dots, n$. Let $$V_{a,\lambda} = \cD^G \otimes_A C_{a,\lambda},$$ which is the $\cD^G$-module spanned by elements of the form $d^k v_{a,\lambda}$, $k\geq 0$. Note that $c_i$ acts by $\lambda_i$ on $V_{a,\lambda}$ for $i=2,\dots, n$, and $V_{a,\lambda} = \bigoplus_{k\in \mathbb{Z}} V_{a,\lambda}[a-kn]$, where $V_{a,\lambda}[a-kn]$ is spanned by $d^k v_{a,\lambda}$ and $c_1$ has eigenvalue $a-kn$ on $V_{a,\lambda}[a-kn]$. 

In order for $V_{a,\lambda}$ to be irreducible of dimension $m<\infty$, we need $d^m v_{\lambda} = 0$ and $d^{m-1} v_{\lambda} \neq 0$. Moreover, we need $(d')^{m-1} d^{m-1} v_{a,\lambda} = c v_{a,\lambda}$ for some $c\neq 0$. We have 
\begin{equation}
\begin{split}
 d d' v_{a,\lambda} &=-P(c_1,\dots, c_n) v_{a,\lambda} = 0\,,\\
 d'd^m v_{a,\lambda} &= (d'd) d^{m-1} v_{a,\lambda}= -Q(c_1,\dots,c_n) d^{m-1} v_{a,\lambda} =0\,,
\end{split} 
 \end{equation} 
 which gives the following algebraic conditions on $a$ and $\lambda$: 
 \begin{equation} \label{firstcond}
 P(a,\lambda_2, \dots, \lambda_n)= 0,\ \ \ \ \ Q(a - n(m-1), \lambda_2,\dots, \lambda_n) = 0.
 \end{equation} 
 The condition $(d')^{m-1} d^{m-1} v_{a,\lambda} = c v_{a,\lambda}$ implies that 
 \begin{equation}\label{secondcond} 
\prod_{i=2}^m Q(a-n(m-i),\lambda_2,\dots,\lambda_n)\neq 0,
 \end{equation} 
 which implies that $Q(a-n(m-i),\lambda_2,\dots,\lambda_n) \neq 0$ for all $i=2,\dots,m$.

In the next three subsections, we write down the polynomials $P$ and $Q$ explicitly in the cases $n=2,3,4$. First, we fix once and for all generators $D, D', C_1,\dots, C_n$ for $\cS^{\gg[t]}$ in these cases. In terms of the usual root bases for $\gs\gl_n$, the generators $C_i$ for $i=2,\dots, n$ are written down in the Appendix. With these choices, we take $d_i = \pi_{\text{Zhu}}(D_i)$, $d'_i = \pi_{\text{Zhu}}(D'_i)$, and $c_i = \pi_{\text{Zhu}}(C_i)$ to be our generators for $\cD^G$. For $i=1,2$, $c_i$ agrees with our previous choice $c_i = \tau(\zeta_i)$ up to scalar multiples, and for $i\geq 3$, $c_i$ agrees with a multiple of $\tau(\zeta_i)$ up to lower order corrections in the Bernstein filtration.

\subsection{The case $n=2$}
The generators $D, D', C_1, C_2$ of $\cS^{\gg[t]}$ satisfy the following normally ordered polynomial relations:
$$:DD': + \frac{1}{2} C_2 - \frac{1}{4} :C_1C_1: -\frac{1}{2} \partial C_1 = 0,$$
$$:D'D: + \frac{1}{2} C_2 - \frac{1}{4} :C_1C_1: +\frac{1}{2} \partial C_1 = 0.$$
Applying the Zhu map yields the following relations among the generators $d, d', c_1, c_2\in \cD^G$: 
$$dd'+\frac{1}{2} c_2 - \frac{1}{4} c_1^2-\frac{3}{2} c_1 -2 = 0,$$
$$d'd+\frac{1}{2} c_2 - \frac{1}{4} c_1^2-\frac{1}{2} c_1  = 0,$$ so we have $P = \frac{1}{2} c_2 - \frac{1}{4} c_1^2-\frac{3}{2} c_1 -2$ and $Q = \frac{1}{2} c_2 - \frac{1}{4} c_1^2-\frac{1}{2} c_1$. We see that $\cD^G\cong U(\gs\gl_2)$ with $[c_1,d'] = 2d'$, $[c_1,d] = -2d$, and $[d,d']= c_1 +2$. The conditions (\ref{firstcond}) in this case are 
$$\frac{1}{2} \lambda_2 - \frac{1}{4} a^2-\frac{3}{2} a -2 = 0,$$
$$\frac{1}{2} \lambda_2 - \frac{1}{4} (a-2m+2)^2-\frac{1}{2} (a-2m+2)  = 0,$$
from which we obtain $a = -3+m$ and $\lambda_2 = \frac{1}{2}(m^2-1)$.
The condition (\ref{secondcond}) is empty in this case, and we obtain the usual classification of finite dimensional $\gs\gl_2$-modules.

\subsection{The case $n=3$}
The relations among the generators $D, D', C_1, C_2,C_3$ of $\cS^{\gg[t]}$ are
$$:D D': - \frac{1}{27} C_3 + \frac{1}{6} :C_2 C_1: - \frac{1}{27} :C_1 C_1 C_1: - \frac{1}{3} : \partial C_1 C_1: - \frac{1}{3} \partial^2 C_1 =0,$$
$$:D' D: - \frac{1}{27} C_3 + \frac{1}{6} :C_2 C_1: - \frac{1}{27} :C_1 C_1 C_1:+ \frac{1}{3} :\partial C_1 C_1: - \frac{1}{2} \partial C_2 - \frac{1}{3} \partial^2 C_1 = 0.$$

The corresponding relations in $\cD^G$ are
$$dd' - \frac{1}{27} c_3 + \frac{1}{6} c_2 c_1 -\frac{1}{27} c_1^3 + \frac{3}{2} c_2 -\frac{2}{3} c_1^2-\frac{11}{3} c_1 -6=0,$$
$$d'd - \frac{1}{27} c_3 + \frac{1}{6} c_2 c_1-\frac{1}{27} c_1^3 + c_2 -\frac{1}{3} c_1^2-\frac{2}{3} c_1=0,$$ so 
\begin{equation*}
\begin{split}
P\, &=\, - \frac{1}{27} c_3 + \frac{1}{6} c_2 c_1 -\frac{1}{27} c_1^3 + \frac{3}{2} c_2 -\frac{2}{3} c_1^2-\frac{11}{3} c_1 -6\,,\\
Q\, &=\,  -\frac{1}{27} c_3 + \frac{1}{6} c_2 c_1-\frac{1}{27} c_1^3 + c_2 -\frac{1}{3} c_1^2-\frac{2}{3} c_1\,.
\end{split}
\end{equation*}
The conditions (\ref{firstcond}) in this case are
$$- \frac{1}{27} \lambda_3 + \frac{1}{6} \lambda_2 a-\frac{1}{27} a^3 +\frac{3}{2} \lambda_2 -\frac{2}{3} a^2-\frac{11}{3} a-6=0,$$
$$- \frac{1}{27} \lambda_3 + \frac{1}{6} \lambda_2 (a-3m+3)-\frac{1}{27} (a-3m+3)^3 - \frac{1}{3}  (a - 3 m + 3)^2 + \lambda_2 - \frac{2}{3} (a - 3 m + 3)=0.$$ This yields $$\lambda_2= \frac{2}{3}(33 + 12 a + a^2 - 18 m - 3 a m + 3 m^2),  \ \ \ \ \lambda_3 = (9 + a) (81 + 27 a + 2 a^2 - 54 m - 9 a m + 9 m^2),$$ where $a$ can be arbitrary. Finally, condition (\ref{secondcond}) implies that for each fixed $m$, a finite set of values of $a, \lambda$ must be excluded in order for $V_{a,\lambda}$ to be irreducible. For example, for $i=2\leq m$ we solve $$- \frac{1}{27} \lambda_3 + \frac{1}{6} \lambda_2 (a-3m+3i)-\frac{1}{27} (a-3m+3i)^3 - \frac{1}{3}  (a - 3 m + 3i)^2 + \lambda_2 - \frac{2}{3} (a - 3 m + 3i)\neq0$$ to obtain $$(a,\lambda_2, \lambda_3) \neq (-7+2m, \frac{2}{3}(-2-m+m^2), -2(10+3m-6m^2+m^3)),$$ and for $i=3\leq m$ we obtain $$(a,\lambda_2, \lambda_3) \neq (-8+2m, \frac{2}{3}(1-2m+m^2), -7-6m+15m^2-2m^3).$$

\subsection{The case $n=4$}
The relations among the generators $D, D', C_1, C_2,C_3,C_4$ of $\cS^{\gg[t]}$ are
$$:D D': - \frac{1}{256} C_4 + \frac{1}{32} :C_1 C_3: + \frac{1}{32} :C_2 C_1  C_1: - 
 \frac{1}{256} :C_1 C_1 C_1 C_1: + \frac{1}{8} :\partial C_1 C_2: $$ $$ -\frac{3}{32} : \partial C_1 C_1 C_1: - 
 \frac{1}{4} :\partial^2 C_1 C_1: - \frac{3}{16} :\partial C_1 \partial C_1: - \frac{1}{4} \partial^3 C_1 =0,$$
$$:D'D: - \frac{1}{256} C_4 + \frac{1}{32} :C_1 C_3: + \frac{1}{32} :C_2  C_1  C_1: - 
 \frac{1}{256} :C_1  C_1 C_1 C_1: - \frac{1}{8} \partial C_3 - \frac{1}{8} :\partial C_1 C_2: $$ $$- \frac{1}{4} :C_1\partial C_2: +  \frac{3}{32} :\partial C_1 C_1 C_1: + \frac{1}{2} \partial^2 C_2 - \frac{1}{4}: \partial^2 C_1 C_1: - \frac{3}{16} :\partial C_1 \partial C_1: +  \frac{1}{4} \partial^3 C_1 =0.$$ 
 
Applying the Zhu map yields
$$dd' -\frac{1}{256} c_4 + \frac{1}{32}  c_1 c_3 + \frac{1}{32}  c_2 c_1^2 - \frac{1}{256}  c_1^4 +  \frac{1}{2} c_3 + \frac{7}{8} c_2 c_1 - \frac{5}{32} c_1^3 + 6 c_2 - \frac{35}{16} c_1^2 - \frac{25}{2} c_1 - 24=0,$$  
$$d'd -\frac{1}{256} c_4 + \frac{1}{32}  c_1 c_3 + \frac{1}{32}  c_2 c_1^2 - \frac{1}{256}  c_1^4 + \frac{3}{8} c_3 + \frac{5}{8} c_1 c_2 - \frac{3}{32}c_1^3 + 3 c_2 - \frac{11}{16} c_1^2 -  \frac{3}{2} c_1=0,$$ so we have $$P = -\frac{1}{256} c_4 + \frac{1}{32}  c_1 c_3 + \frac{1}{32}  c_2 c_1^2 - \frac{1}{256}  c_1^4 +  \frac{1}{2} c_3 + \frac{7}{8} c_2 c_1 - \frac{5}{32} c_1^3 + 6 c_2 - \frac{35}{16} c_1^2 - \frac{25}{2} c_1 - 24,$$ $$Q=-\frac{1}{256} c_4 + \frac{1}{32}  c_1 c_3 + \frac{1}{32}  c_2 c_1^2 - \frac{1}{256}  c_1^4 + \frac{3}{8} c_3 + \frac{5}{8} c_1 c_2 - \frac{3}{32}c_1^3 + 3 c_2 - \frac{11}{16} c_1^2 -  \frac{3}{2} c_1.$$

Conditions (\ref{firstcond}) in this case are $$-\frac{1}{256} \lambda_4 + \frac{1}{32}  a \lambda_3 + \frac{1}{32}  \lambda_2 a^2 - \frac{1}{256}  a^4 + \frac{1}{2} \lambda_3 + \frac{7}{8} \lambda_2 a - \frac{5}{32} a^3 + 6 \lambda_2 - \frac{35}{16} a^2 - \frac{25}{2} a - 24=0,$$  
$$-\frac{1}{256} \lambda_4 + \frac{1}{32}   \lambda_3(a - 4 m + 4) + \frac{1}{32}  \lambda_2 (a - 4 m + 4)^2 - \frac{1}{256}  (a - 4 m + 4)^4 + \frac{3}{8} \lambda_3 + \frac{5}{8}  \lambda_2 (a - 4 m + 4)$$ $$ - \frac{3}{32}(a - 4 m + 4)^3 + 3 \lambda_2 - \frac{11}{16} (a - 4 m + 4)^2 -  \frac{3}{2} (a - 4 m + 4) = 0.$$     

We obtain $$ \lambda_3 = \frac{1}{2}(800 + 280 a + 30 a^2 + a^3 - 56 \lambda_2 - 4 a \lambda_2 - 560 m - 120 a m - 6 a^2 m + 8 \lambda_2 m + 160 m^2 + 16 a m^2 - 16 m^3), $$ \begin{equation} \label{condi4} \lambda_4 = (16 + a) (16 + a - 4 m) (176 + 48 a + 3 a^2 - 8 \lambda_2 - 96 m -12 a m + 16 m^2),\end{equation} where $a$ and $\lambda_2$ can be arbitrary. Finally, condition (\ref{secondcond}) is $$-\frac{1}{256} \lambda_4 + \frac{1}{32}   \lambda_3(a - 4 m + 4i) + \frac{1}{32}  \lambda_2 (a - 4 m + 4i)^2 - \frac{1}{256}  (a - 4 m + 4i)^4 + \frac{3}{8} \lambda_3 + \frac{5}{8}  \lambda_2 (a - 4 m + 4i) $$ \begin{equation} \label{condii4}- \frac{3}{32}(a - 4 m + 4i)^3 + 3 \lambda_2 - \frac{11}{16} (a - 4 m + 4i)^2 -  \frac{3}{2} (a - 4 m + 4i) \neq 0\end{equation} for $i=2,\dots,m$, which eliminates an algebraic subvariety of dimension one. It follows that the set of $(a,\lambda)$ which satisfy (\ref{condi4}) will also satisfy (\ref{condii4}) generically.

\section{Appendix}

In this Appendix, we write down explicit formulas for $C_2,\dots, C_n$ in the case $n=3$ and $n=4$. We work in the standard root bases of $\gs\gl_3$ and $\gs\gl_4$, where $L^{ij}$ corresponds to the matrix $E_{ij}$ for $i\neq j$, and $L^{H_i}$ corresponds to $E_{11} - E_{i+1,i+1}$. These calculations were performed using the Mathematica package of K. Thielemans \cite{T}.

In the case $n=3$, we have
$$C_2 = : L^{12}  L^{21}: + : L^{21}   L^{12}: + : L^{13}  L^{31}: + : L^{31}  L^{13}: +: L^{23}   L^{32}: + : L^{32}  L^{23}: $$ $$+ 2/3 : L^{H_{1}}  L^{H_{1}}:  - 2/3 : L^{H_{1}}  L^{H_{2}}: + 2/3 : L^{H_{2}}  L^{H_{2}}:$$

$$C_3 = -27 : L^{12}  L^{23}  L^{31}: - 27 : L^{13}  L^{21}  L^{32}: - 
  18 : L^{13}  L^{31}  L^{H_{1}}: + 9 : L^{23}  L^{32}   L^{H_{1}}: $$ $$- 
  18 : L^{12}  L^{21}   L^{H_{2}}: + 9 : L^{23}  L^{32}   L^{H_{2}}: + 
  9 : L^{12}  L^{21}   L^{H_{1}}: + 9 : L^{13}  L^{31}   L^{H_{2}}: $$ $$- 
  3 : L^{H_{1}}  L^{H_{1}}   L^{H_{2}}: - 3 : L^{H_{1}}  L^{H_{2}}   L^{H_{2}}: + 
  2 : L^{H_{1}}  L^{H_{1}}   L^{H_{1}}: + 2 : L^{H_{2}}  L^{H_{2}}   L^{H_{2}}: $$ $$- 
  27 : L^{13}   \partial L^{31}: - 27 : L^{23}   \partial L^{32}: - 27 : L^{32}  \partial L^{23}: - 
  9 : L^{H_{1}} \partial  L^{H_{1}}: + 9 : L^{H_{1}} \partial  L^{H_{2}}: + 18 : L^{H_{2}} \partial  L^{H_{1}}: $$ $$- 
  18 : L^{H_{2}} \partial  L^{H_{2}}: + 9/2  \partial^2 L^{H_{1}} + 9/2 \partial^2 L^{H_{2}}.$$

In the case $n=4$, we have

$$ C_2 = : L^{12}   L^{21}: + : L^{21}   L^{12}: + : L^{13}   L^{31}: + : L^{31}   L^{13}: + : L^{14}   L^{41}: + : L^{41}   L^{14}: $$ $$+ : L^{23}   L^{32}: + : L^{32}  L^{23}: +  : L^{24}   L^{42}: + : L^{42} L^{24}: + : L^{34}   L^{43}: + : L^{43}   L^{34}: $$ $$ + 3/4 : L^{H_{1}}   L^{H_{1}}: + 3/4 : L^{H_{2}}   L^{H_{2}}: + 3/4 : L^{H_{3}}   L^{H_{3}}: - 1/2 : L^{H_{1}}  L^{H_{2}}: - 1/2 : L^{H_{1}}   L^{H_{3}}: - 1/2 : L^{H_{2}}   L^{H_{3}}:$$

$$C_3 = -4 : L^{12}  L^{21}   L^{H_{1}}: + 4 : L^{12}  L^{21}   L^{H_{2}}: + 
  4 : L^{12}  L^{21}   L^{H_{3}}: + 8 : L^{12}  L^{23 }  L^{31}: $$ $$+ 
  8 : L^{12}  L^{24}   L^{41}: + 8 : L^{13}  L^{21}   L^{32}: + 
  4 : L^{13}  L^{31}   L^{H_{1}}: - 4 : L^{13}  L^{31}   L^{H_{2}}: $$ $$+ 
  4 : L^{13}  L^{31}   L^{H_{3}}: + 8 : L^{13}  L^{34}   L^{41}: + 
  8 : L^{14}  L^{21}   L^{42}: + 8 : L^{14}  L^{31}   L^{43}: $$ $$+ 
  4 : L^{14}  L^{41}  L^{H_{1}}: + 4 : L^{14}  L^{41}   L^{H_{2}}: - 
  4 : L^{14}  L^{41}   L^{H_{3}}: - 4 : L^{23}  L^{32}   L^{H_{1}}:$$ $$ - 
  4 : L^{23}  L^{32}   L^{H_{2}}: + 4 : L^{23}  L^{32}   L^{H_{3}}: + 
  8 : L^{23}  L^{34}   L^{42}: + 8 : L^{24}  L^{32}   L^{43}: $$ $$- 
  4 : L^{24}  L^{42}   L^{H_{1}}: + 4 : L^{24}  L^{42}   L^{H_{2}}: - 
  4 : L^{24}  L^{42}   L^{H_{3}}: + 4 : L^{34}  L^{43}   L^{H_{1}}: $$ $$- 
  4 : L^{34}  L^{43}   L^{H_{2}}: - 4 : L^{34}  L^{43}   L^{H_{3}}: - 
  : L^{H_{1}}  L^{H_{1}}   L^{H_{1}}: + : L^{H_{1}}  L^{H_{1}}   L^{H_{2}}: $$ $$+ 
  : L^{H_{1}}  L^{H_{1}}   L^{H_{3}}: + : L^{H_{1}}  L^{H_{2}}   L^{H_{2}}: - 
  2 : L^{H_{1}}  L^{H_{2}}   L^{H_{3}}: + : L^{H_{1}}  L^{H_{3}}   L^{H_{3}}: $$ $$ - 
  : L^{H_{2}}  L^{H_{2}}   L^{H_{2}}: + : L^{H_{2}}  L^{H_{2}}   L^{H_{3}}: + 
  : L^{H_{2}}  L^{H_{3}}   L^{H_{3}}: - : L^{H_{3}}  L^{H_{3}}   L^{H_{3}}: $$ $$+ 8 : L^{13}   \partial L^{31}: + 
  16 : L^{14}  \partial  L^{41}: + 8 : L^{23}   \partial L^{32}: + 8 : L^{32} \partial  L^{23}: + 
  16 : L^{24}   \partial L^{42}: + 8 : L^{42} \partial   L^{24} : $$ $$+ 16 : L^{34}  \partial  L^{43}: + 
  16 : L^{43}  \partial L^{34}: + 4 : L^{H_{1}}\partial    L^{H_{1}}: - 4 : L^{H_{1}}\partial  L^{H_{3}}: - 
  4 : L^{H_{2}}\partial L^{H_{1}}: + 8 : L^{H_{2}}  \partial  L^{H_{2}}: $$ $$ - 4 : L^{H_{2}} \partial L^{H_{3}}: - 
  4 : L^{H_{3}} \partial   L^{H_{1}}: - 8 : L^{H_{3}} \partial    L^{H_{2}}: + 12 : L^{H_{3}}\partial   L^{H_{3}}: - 4  \partial^2 L^{H_{2}} -  4 \partial^2 L^{H_{3}}, $$
 
$$C_4 = -16 : L^{12}  L^{21}   L^{H_{1}}    L^{H_{1}}: + 32  : L^{12}    L^{21}   L^{H_{1}}    L^{H_{2}}: + 
  32  : L^{12}   L^{21}    L^{H_{1}}   L^{H_{3}}: + 48 : L^{12}   L^{21}   L^{H_{2}}  L^{H_{2}}: $$ $$- 
  160  : L^{12}   L^{21}   L^{H_{2}}    L^{H_{3}}: + 48 : L^{12}   L^{21}  L^{H_{3}}   L^{H_{3}}: + 
  64  : L^{12}    L^{23}   L^{31}    L^{H_{1}}: + 64 : L^{12}    L^{23}   L^{31}   L^{H_{2}}: $$ $$- 
  192  : L^{12}    L^{23}   L^{31}   L^{H_{3}}: - 256 : L^{12}   L^{23}   L^{34}   L^{41}: - 
  256 : L^{12}   L^{24}   L^{31}    L^{43}: + 64 : L^{12}    L^{24}   L^{41}   L^{H_{1}}: $$ $$- 
  192 : L^{12}   L^{24}  L^{41}   L^{H_{2}}: + 64 : L^{12}    L^{24}    L^{41}    L^{H_{3}}: + 
  256  : L^{12}    L^{34}    L^{21}   L^{43}: + 64 :  L^{13}   L^{21}   L^{32}   L^{H_{1}}: $$ $$+ 
  64  : L^{13}   L^{21}  L^{32}   L^{H_{2}}: - 192 :  L^{13}    L^{21}  L^{32}    L^{H_{3}}: + 
  256 : L^{13}   L^{24}    L^{31}   L^{42}: - 256 :  L^{13}    L^{24}   L^{41}   L^{32}: $$ $$+ 
  48  : L^{13}   L^{31}   L^{H_{1}}   L^{H_{1}}: + 32  : L^{13}    L^{31}   L^{H_{1}}    L^{H_{2}}: - 
  160  : L^{13}   L^{31}   L^{H_{1}}    L^{H_{3}}: - 16  : L^{13}    L^{31}   L^{H_{2}}   L^{H_{2}}: $$ $$+ 
  32  : L^{13}   L^{31}    L^{H_{2}}    L^{H_{3}}: + 48 :  L^{13}    L^{31}    L^{H_{3}}   L^{H_{3}}: - 
  256 : L^{13}    L^{34}    L^{21}    L^{42}: - 192 : L^{13}     L^{34}    L^{41}    L^{H_{1}}: $$ $$+ 
  64  : L^{13}   L^{34}   L^{41}  L^{H_{2}}: + 64 : L^{13}    L^{34}   L^{41}    L^{H_{3}}: - 
  256  : L^{14}   L^{21}    L^{32}   L^{43}: + 64 : L^{14}   L^{21}  L^{42}   L^{H_{1}}: $$ $$- 
  192  : L^{14}    L^{21}    L^{42}    L^{H_{2}}: + 64  : L^{14}   L^{21}   L^{42}   L^{H_{3}}: - 
  256  : L^{14}    L^{23}   L^{31}   L^{42}: + 256  : L^{14}    L^{23}   L^{41}   L^{32}: $$ $$- 
  192  : L^{14}   L^{31}   L^{43}    L^{H_{1}}: + 64  : L^{14}    L^{31}   L^{43}    L^{H_{2}}: + 
  64  : L^{14}   L^{31}    L^{43}   L^{H_{3}}: + 48  : L^{14}   L^{41}   L^{H_{1}}   L^{H_{1}}: $$ $$- 
  160  : L^{14}    L^{41}   L^{H_{1}}   L^{H_{2}}: + 32  : L^{14}   L^{41}   L^{H_{1}}   L^{H_{3}} : + 
  48 : L^{14}    L^{41}    L^{H_{2}}    L^{H_{2}}: + 32 : L^{14}   L^{41}   L^{H_{2}}    L^{H_{3}}: $$ $$- 
  16  : L^{14}   L^{41}   L^{H_{3}}    L^{H_{3}}: - 16 :  L^{23}   L^{32}   L^{H_{1}}   L^{H_{1}}: - 
  32  : L^{23}   L^{32}    L^{H_{1}}    L^{H_{2}}: + 32  : L^{23}   L^{32}    L^{H_{1}}   L^{H_{3}}: $$ $$- 
  16  : L^{23}   L^{32}   L^{H_{2}}    L^{H_{2}}: + 32  : L^{23}    L^{32}   L^{H_{2}}    L^{H_{3}} : + 
  48 : L^{23}   L^{32}   L^{H_{3}}   L^{H_{3}}: + 64  : L^{23}   L^{34}   L^{42}   L^{H_{1}} : $$ $$+ 
  64 : L^{23}    L^{34}    L^{42}    L^{H_{2}} : + 64  : L^{23}   L^{34}    L^{42}    L^{H_{3}} : + 
  64  : L^{24}   L^{32}    L^{43}   L^{H_{1}} : + 64 :  L^{24}   L^{32}    L^{43}   L^{H_{2}}: $$ $$+ 
  64 : L^{24}   L^{32}   L^{43}    L^{H_{3}} : - 16 :  L^{24}   L^{42}   L^{H_{1}}   L^{H_{1}}: + 
  32 :  L^{24}   L^{42}   L^{H_{1}}   L^{H_{2}}: - 32  : L^{24}   L^{42}    L^{H_{1}}  L^{H_{3}} : $$ $$+ 
  48 : L^{24}   L^{42}   L^{H_{2}}   L^{H_{2}} : + 32  : L^{24}    L^{42}    L^{H_{2}}   L^{H_{3}} : - 
  16 :  L^{24}    L^{42}   L^{H_{3}}  L^{H_{3}}: + 48 :  L^{34}    L^{43}   L^{H_{1}}   L^{H_{1}} : $$ $$+ 
  32 : L^{34}    L^{43}   L^{H_{1}}   L^{H_{2}} : + 32  : L^{34}    L^{43}    L^{H_{1}}    L^{H_{3}} : - 
  16  : L^{34}    L^{43}  L^{H_{2}}   L^{H_{2}}: - 32  : L^{34}    L^{43}    L^{H_{2}}    L^{H_{3}}: $$ $$- 
  16 : L^{34}    L^{43}   L^{H_{3}}   L^{H_{3}}: - 3 :  L^{H_{1}}    L^{H_{1}}   L^{H_{1}}   L^{H_{1}} : + 
  4 :  L^{H_{1}}   L^{H_{1}}   L^{H_{1}}    L^{H_{2}} : + 4  : L^{H_{1}}   L^{H_{1}}   L^{H_{1}}   L^{H_{3}}: $$ $$+ 
  14 : L^{H_{1}}    L^{H_{1}}    L^{H_{2}}    L^{H_{2}}: - 20  : L^{H_{1}}   L^{H_{1}}   L^{H_{2}}   L^{H_{3}} : + 
  14  : L^{H_{1}}   L^{H_{1}}   L^{H_{3}}   L^{H_{3}} : + 4  : L^{H_{1}}    L^{H_{2}}   L^{H_{2}}   L^{H_{2}} :$$ $$ - 
  20 : L^{H_{1}}    L^{H_{2}}   L^{H_{2}}   L^{H_{3}} : - 20 : L^{H_{1}}   L^{H_{2}}    L^{H_{3}}   L^{H_{3}} : + 
  4 : L^{H_{1}}   L^{H_{3}}   L^{H_{3}}    L^{H_{3}} : - 3  : L^{H_{2}}    L^{H_{2}}   L^{H_{2}}   L^{H_{2}} : $$ $$+ 
  4 : L^{H_{2}}   L^{H_{2}}  L^{H_{2}}    L^{H_{3}} : + 14  : L^{H_{2}}    L^{H_{2}}   L^{H_{3}}    L^{H_{3}} : + 
  4 : L^{H_{2}}   L^{H_{3}}    L^{H_{3}}    L^{H_{3}}: - 3  : L^{H_{3}}   L^{H_{3}}    L^{H_{3}}    L^{H_{3}}: $$ $$+
64 : L^{12}  L^{21}  \partial L^{H_{1}}: + 64 : L^{12}  L^{21} \partial  L^{H_{2}}: - 
 192 : L^{12}  L^{21} \partial   L^{H_{3}}: + 64 : L^{13} \partial  L^{31}  L^{H_{1}}: $$ $$+ 
 64 : L^{13}  L^{31} \partial  L^{H_{1}}: + 64 : L^{13} \partial L^{31}  L^{H_{2}}: + 
 64 : L^{13}  L^{31} \partial  L^{H_{2}}: - 192 : L^{13} \partial L^{31}   L^{H_{3}}: $$ $$- 
 192 : L^{13}  L^{31} \partial  L^{H_{3}}: - 128 : L^{14} \partial  L^{41}   L^{H_{1}}: + 
 64 : L^{14}  L^{41} \partial  L^{H_{1}}: - 128 : L^{14} \partial L^{41}   L^{H_{2}}: $$ $$- 
 192 : L^{14}  L^{41}  \partial L^{H_{2}}: + 128 : L^{14} \partial  L^{41}  L^{H_{3}}: + 
 64 : L^{14} L^{41} \partial   L^{H_{3}}: + 64 : \partial L^{23}  L^{32}   L^{H_{1}}: $$ $$+ 
 64 : L^{23} \partial  L^{32}   L^{H_{1}}: + 64 : L^{23}  L^{32} \partial  L^{H_{1}}: + 
 64 : \partial L^{23}  L^{32}   L^{H_{2}}: + 64 : L^{23} \partial  L^{32}   L^{H_{2}}: $$ $$+ 
 64 : L^{23}  L^{32} \partial   L^{H_{2}}: - 192 : \partial L^{23} L^{32}   L^{H_{3}}: - 
 192 : L^{23}  \partial L^{32}   L^{H_{3}}: - 192 : L^{23}  L^{32}  \partial L^{H_{3}}: $$ $$+ 
 64 : \partial L^{24}  L^{42}   L^{H_{1}}: + 128 : L^{24} \partial  L^{42}  L^{H_{1}}: + 
 64 : L^{24}  L^{42} \partial   L^{H_{1}}: - 192 : \partial L^{24}  L^{42}   L^{H_{2}}: $$ $$- 
 128 : L^{24} \partial L^{42}   L^{H_{2}}: - 192 : L^{24}  L^{42}   \partial L^{H_{2}}: + 
 64 : \partial L^{24}  L^{42}   L^{H_{3}}: + 128 : L^{24} \partial L^{42}   L^{H_{3}}: $$ $$+ 
 64 : L^{24}  L^{42}  \partial L^{H_{3}}: - 128 : \partial L^{34}  L^{43}   L^{H_{1}}: - 
 128 : L^{34} \partial  L^{43}   L^{H_{1}}: - 192 : L^{34}  L^{43}   \partial L^{H_{1}}: $$ $$+ 
 128 : \partial L^{34}  L^{43}   L^{H_{2}}: + 128 : L^{34} \partial  L^{43}   L^{H_{2}}: + 
 64 : L^{34}  L^{43}  \partial L^{H_{2}}: + 128 : \partial L^{34}  L^{43}   L^{H_{3}}: $$ $$+ 
 128 : L^{34} \partial L^{43}   L^{H_{3}}: + 64 : L^{34}  L^{43}  \partial L^{H_{3}}: - 
 256 : L^{12}  L^{24}  \partial L^{41}: - 256 : L^{13} \partial  L^{34}   L^{41}: $$ $$- 
 256 : L^{13}  L^{34}  \partial L^{41}: - 256 : L^{14}  L^{21} \partial  L^{42}: - 
 256 : L^{14} \partial  L^{31}  L^{43}: - 256 : L^{14}  L^{31}  \partial L^{43}:$$ $$ - 
 256 : \partial L^{23} L^{34}   L^{42}: - 256 : L^{23}  \partial L^{34}   L^{42}: - 
 256 : L^{23}  L^{34}  \partial L^{42}: - 256 : \partial L^{24} L^{32}   L^{43}: $$ $$- 
 256 : L^{24} \partial L^{32}   L^{43}: - 256 : L^{24}  L^{32} \partial   L^{43}: + 
 40 : \partial L^{H_{1}}  L^{H_{1}}   L^{H_{1}}: - 48 : \partial L^{H_{1}} L^{H_{1}}   L^{H_{2}}: $$ $$- 
 8 : L^{H_{1}}  L^{H_{1}}  \partial L^{H_{2}}: - 48 : \partial L^{H_{1}}  L^{H_{1}}   L^{H_{3}}: - 
 56 : L^{H_{1}}  L^{H_{1}}  \partial L^{H_{3}}: - 24 : \partial L^{H_{1}}  L^{H_{2}}   L^{H_{2}}:$$ $$ - 
 16 : L^{H_{1}} \partial L^{H_{2}}   L^{H_{2}}: - 24 : \partial L^{H_{1}}  L^{H_{3}}   L^{H_{3}}: - 
 48 : L^{H_{1}} \partial L^{H_{3}}   L^{H_{3}}: + 144 : \partial L^{H_{1}}  L^{H_{2}}   L^{H_{3}}: $$ $$+ 
 112 : L^{H_{1}} \partial L^{H_{2}}  L^{H_{3}}: + 80 : L^{H_{1}}  L^{H_{2}}   \partial L^{H_{3}}: + 
 56 : \partial L^{H_{2}}  L^{H_{2}}   L^{H_{2}}: - 80 : \partial L^{H_{2}}  L^{H_{2}}   L^{H_{3}}:$$ $$ - 
 56 : L^{H_{2}}  L^{H_{2}}   \partial L^{H_{3}}: - 72 : \partial L^{H_{2}} L^{H_{3}}   L^{H_{3}}: - 
 48 : L^{H_{2}} \partial  L^{H_{3}}   L^{H_{3}}: + 72 : \partial L^{H_{3}} L^{H_{3}}   L^{H_{3}}: $$ $$- 
 256 : L^{14}  \partial^2 L^{41}: - 256 : \partial L^{24}  \partial L^{42}: - 256 : L^{24}  \partial^2 L^{42}: - 
 256 : \partial^2 L^{34}  L^{43}: $$ $$- 512 : \partial L^{34} \partial   L^{43}: - 256 : L^{34}\partial^2   L^{43}: - 
 16 : \partial L^{H_{1}}   \partial L^{H_{1}}: - 64 : \partial^2 L^{H_{2}}  L^{H_{2}}: $$ $$- 80 : \partial L^{H_{2}}  \partial L^{H_{2}}: - 
 192 : \partial^2 L^{H_{3}}   L^{H_{3}}: - 144 : \partial L^{H_{3}}   \partial L^{H_{3}}: - 96 : \partial L^{H_{1}} \partial   L^{H_{2}}: $$ $$- 
 64 : L^{H_{1}}   \partial^2 L^{H_{2}}: + 96 :\partial L^{H_{1}} \partial   L^{H_{3}}: + 64 : L^{H_{1}}  \partial L^{H_{3}}: + 
 192 :\partial^2 L^{H_{2}}   L^{H_{3}}: $$ $$+ 288 : \partial L^{H_{2}}  \partial L^{H_{3}}: + 64 : L^{H_{2}} \partial^2   L^{H_{3}}: - 
 64  \partial^3 L^{H_{1}}- 64 \partial^3 L^{H_{2}} + 192  \partial^3 L^{H_{3}}. $$

\end{document}